\newtheorem{lemma}{Lemma}[section]
\newtheorem{theorem}{Theorem}[section]
\newtheorem{corollary}[lemma]{Corollary}
\newtheorem*{lemma*}{Lemma}
\theoremstyle{definition}
\newcommand{\R}{\mathbb{R}}
\newcommand{\C}{\mathbb{C}}
\newcommand{\N}{\mathbb{N}}
\newcommand{\real}{\operatorname{Re}}
\newcommand{\imag}{\operatorname{Im}}
\newtheorem{definition}{Definition}[section]
\newtheorem{remark}{Remark}[section]
\numberwithin{equation}{section}		 			
\numberwithin{figure}{section}			 			
\begin{document}
\title[Explicit formulas of a generalized Ramanujan sum]{Explicit formulas of a generalized Ramanujan sum}
\author{Patrick K\"{u}hn}
\author{Nicolas Robles}
\address{Institut f\"{u}r Mathematik, Universit\"{a}t Z\"{u}rich, Winterthurerstrasse 190, CH-8057 Z\"{u}rich, Switzerland}
\email{patrick.kuehn@math.uzh.ch}
\email{nicolas.robles@math.uzh.ch}
\subjclass[2010]{Primary: 11M06, 11N56; secondary: 11M26.\\ \indent \textit{Keywords and phrases}: Ramanujan sums, explicit formulas, prime number theorem.}
\maketitle
\begin{abstract}
Explicit formulas involving a generalized Ramanujan sum are derived. An analogue of the prime number theorem is obtained and equivalences of the Riemann hypothesis are shown. Finally, explicit formulas of Bartz are generalized.
\end{abstract}


\section{Introduction}
%
In \cite{ramanujan} Ramanujan introduced the following trigonometrical sum.
\begin{definition}
The Ramanujan sum is defined by
\begin{align} \label{ramanujansum}
{c_q}(n) = \sum_{\substack{(h,q) = 1}} {{e^{2\pi inh/q}}},
\end{align}
where $q$ and $n$ are in $\N$ and the summation is over a reduced residue system $\bmod \; q$. 
\end{definition}
Many properties were derived in \cite{ramanujan} and elaborated in \cite{hardy}. Cohen \cite{cohen} generalized this arithmetical function in the following way.
\begin{definition}
Let $\beta \in \N$. The $c_q^{(\beta)}(n)$ sum is defined by
\begin{align} \label{cohendef}
c_q^{(\beta )}(n) = \sum_{\substack{(h,{q^\beta })_\beta  = 1}} {{e^{2\pi inh/{q^\beta }}}},
\end{align}
where $h$ ranges over the non-negative integers less than $q^{\beta}$ such that $h$ and $q^{\beta}$ have no common $\beta$-th power divisors other than $1$.
\end{definition}
It follows immediately that when $\beta = 1$, \eqref{cohendef} becomes the Ramanujan sum \eqref{ramanujansum}. Among the most important properties of $c_q^{(\beta )}(n)$ we mention that it is a multiplicative function of $q$, i.e.
\[
c_{pq}^{(\beta )}(n) = c_p^{(\beta )}(n)c_q^{(\beta )}(n),\quad (p,q) = 1.
\]
%
%
\indent The purpose of this paper is to derive explicit formulas involving $c_q^{(\beta)}(n)$ in terms of the non-trivial zeros $\rho$ of the Riemann zeta-function and establish arithmetic theorems.
%
%
\begin{definition}
Let $z \in \C$. The generalized divisor function $\sigma_z^{(\beta)}(n)$ is the sum of the $z^{\operatorname{th}}$ powers of those divisors of $n$ which are $\beta^{\operatorname{th}}$ powers of integers, i.e.
\[\sigma_z^{(\beta)}(n) = \sum_{{d^\beta }|n} {{d^{\beta z}}}. \]
\end{definition}
The object of study is the following.
\begin{definition}
For $x \ge 1$, we define
\[\mathfrak{C}^{(\beta )}(n,x) = \sum_{q \leqslant x} {c_q^{(\beta )}(n)}. \]
\end{definition}
For technical reasons we set
\begin{equation}
\mathfrak{C}^{\sharp,(\beta )}(n,x) = 
					\begin{cases}
					\mathfrak{C}^{(\beta )}(n,x), & \mbox{ if } x \notin \N,\\
					\mathfrak{C}^{(\beta )}(n,x) - \tfrac{1}{2}c_x^{(\beta)}(n), & \mbox{ if } x \in \N.
					\end{cases}
\end{equation}
The explicit formula for $\mathfrak{C}^{\sharp,(\beta )}(n,x)$ is then as follows.
\begin{theorem} \label{explicitcohenramanujan}
Let $\rho$ and $\rho_m$ denote non-trivial zeros of $\zeta(s)$ of multiplicity $1$ and $m \ge 2$ respectively. Fix integers $\beta$, $n$. There is an $1>\varepsilon > 0$ and a $T_0 = T_0(\varepsilon)$ such that \textnormal{(\ref{br-a})} and \textnormal{(\ref{br-b})} hold for a sequence $T_{\nu}$ and
\[
\mathfrak{C}^{\sharp,(\beta )}(n,x) =  - 2\sigma_1^{(\beta )}(n) + \sum_{\substack{|\gamma | < T_{\nu}}} {\frac{{\sigma_{1 - \rho /\beta }^{(\beta )}(n)}}{{\zeta '(\rho )}}\frac{{{x^\rho }}}{\rho }}  + {\rm K}_{T_{\nu}}(x) - \sum_{k = 1}^\infty  {\frac{{{{( - 1)}^{k}}(2\pi /{x})^{2k}}}{{(2k)!k\zeta (2k + 1)}}\sigma_{1 + 2k/\beta }^{(\beta )}(n)}  + E_{T_{\nu}}(x) ,
\]
where the error term satisfies
\[ 
E_{T_{\nu}}(x) \ll  \frac{x \log x}{T_{\nu}^{1-\varepsilon}} ,
\]
and where for the zeros of multiplicity $m \ge 2$ we have
\[
{\rm K}_{T_{\nu}}(x) = \sum_{m \geqslant 2} {\sum_{{|\gamma_m|<T_{\nu}}} {\kappa ({\rho _m},x)} } ,\quad \kappa ({\rho _m},x) = \frac{1}{{(m - 1)!}}\mathop {\lim }\limits_{s \to {\rho _m}} \frac{{{d^{m - 1}}}}{{d{s^{m - 1}}}}\bigg( {{{(s - {\rho _m})}^m}\frac{{\sigma_{1 - s/\beta }^{(\beta )}(n)}}{{\zeta (s)}}\frac{{{x^s}}}{s}} \bigg).
\]
Moreover, in the limit $\nu \to \infty$ we have
\[
 \mathfrak{C}^{\sharp,(\beta )}(n,x) =  - 2\sigma_1^{(\beta )}(n) + \lim_{\nu \to \infty} \sum_{\substack{|\gamma | < T_{\nu}}} {\frac{{\sigma_{1 - \rho /\beta }^{(\beta )}(n)}}{{\zeta '(\rho )}}\frac{{{x^\rho }}}{\rho }}  + \lim_{\nu \to \infty} {\rm K}_{T_{\nu}}(x) - \sum_{k = 1}^\infty  {\frac{{{{( - 1)}^{k}}(2\pi /{x})^{2k}}}{{(2k)!k\zeta (2k + 1)}}\sigma_{1 + 2k/\beta }^{(\beta )}(n)}.
\]
\end{theorem}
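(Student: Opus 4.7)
The plan is to obtain the formula via a truncated Perron integral, translate the contour leftwards along the ordinate sequence $T_\nu$ provided by (br-a)--(br-b), and collect the residues of the integrand at $s=0$, at the non-trivial zeros $\rho$ of $\zeta(s)$ with $|\gamma|<T_\nu$, and at the trivial zeros $s=-2k$. The starting point is the Dirichlet series identity
\[
\sum_{q=1}^\infty \frac{c_q^{(\beta)}(n)}{q^s} \;=\; \frac{\sigma_{1-s/\beta}^{(\beta)}(n)}{\zeta(s)} \qquad (\operatorname{Re}(s)>1),
\]
which I would prove by exploiting the multiplicativity of $c_q^{(\beta)}(n)$ in $q$ and computing its local Euler factors from Cohen's explicit description. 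Since $n$ is fixed, $\sigma_{1-s/\beta}^{(\beta)}(n)$ is a finite sum over the $\beta$-th power divisors of $n$, hence entire in $s$, so the only singularities of the right-hand side are the zeros of $\zeta(s)$ together with the pole of $\zeta$ at $s=1$ (producing a zero of the integrand, not a pole).

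Perron's formula in the sharp convention (which is precisely what forces the $\tfrac12 c_x^{(\beta)}(n)$ correction at integer $x$) then yields
\[
\mathfrak{C}^{\sharp,(\beta)}(n,x) \;=\; \frac{1}{2\pi i}\int_{c-iT_\nu}^{c+iT_\nu}\frac{\sigma_{1-s/\beta}^{(\beta)}(n)}{\zeta(s)}\frac{x^s}{s}\,ds \;+\; R_\nu(x),
\]
with $c=1+1/\log x$ and $R_\nu(x)$ controlled by the standard remainder bound for Perron's formula. Shifting the contour to $\operatorname{Re}(s)=-U$ with large $U>0$, closed by horizontal segments at heights $\pm T_\nu$, the enclosed poles are: a simple pole at $s=0$ producing $\sigma_1^{(\beta)}(n)/\zeta(0)=-2\sigma_1^{(\beta)}(n)$ since $\zeta(0)=-\tfrac12$; the simple non-trivial zeros $\rho$ contributing $\sigma_{1-\rho/\beta}^{(\beta)}(n)\,x^\rho/(\zeta'(\rho)\,\rho)$ and multiplicity-$m$ zeros producing the stated expressions $\kappa(\rho_m,x)$ via the usual derivative formula for higher-order residues; and the trivial zeros $s=-2k$, where the functional equation gives $\zeta'(-2k)=(-1)^k(2k)!\,\zeta(2k+1)/(2(2\pi)^{2k})$, so that the residue at $-2k$ matches exactly the $k$-th term of the arithmetical series displayed in the theorem.

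The horizontal segments at height $\pm T_\nu$ are estimated using (br-a)--(br-b), which control $1/\zeta$ on these lines and produce the error term $E_{T_\nu}(x)\ll x\log x/T_\nu^{1-\varepsilon}$. As $U\to\infty$, the left vertical segment decays because $|x^s|=x^{-U}$ shrinks exponentially while the functional equation $\zeta(s)=\chi(s)\zeta(1-s)$ converts $1/\zeta(s)$ on far-left lines into $\zeta(1-s)^{-1}\chi(s)^{-1}$, which admits convexity estimates in the absolutely convergent half-plane $\operatorname{Re}(1-s)>1$; this justifies summing the residues over \emph{all} trivial zeros, and the resulting series converges since $(2\pi/x)^{2k}/(2k)!$ decays super-geometrically. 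Passing $\nu\to\infty$ and invoking the error bound then produces the limiting identity.

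\textbf{Main obstacle.} The principal technical difficulty is the uniform control of $1/\zeta$ along the horizontal segments $\operatorname{Im}(s)=\pm T_\nu$: the sequence $T_\nu$ must be chosen to avoid the clustering of zeros so that (br-a)--(br-b) are applicable, and the resulting bound must absorb the growth of $\sigma_{1-s/\beta}^{(\beta)}(n)$ on vertical lines far to the left of $\operatorname{Re}(s)=1$. A secondary subtle point is justifying the passage $U\to\infty$ uniformly in $T_\nu$, needed both to convert the finite sum of residues at trivial zeros into the infinite series in the statement and to take $\nu\to\infty$ without losing the error estimate.
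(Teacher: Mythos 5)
Your proposal follows essentially the same route as the paper: a truncated Perron integral at $c=1+1/\log x$, a leftward contour shift closed at ordinates $T_\nu$ chosen via Montgomery's lemma so that (\ref{br-a})--(\ref{br-b}) control $1/\zeta$ on the horizontal segments, residues at $s=0$, at the non-trivial zeros (with the higher-order residue formula for multiplicities $m\ge 2$ giving $\kappa(\rho_m,x)$), and at the trivial zeros $s=-2k$ with $\zeta'(-2k)=(-1)^k(2k)!\zeta(2k+1)/(2(2\pi)^{2k})$, followed by pushing the far-left edge to $-\infty$ using the functional equation and Stirling-type decay of $1/\Gamma(1-s)$. The paper formalizes the Perron step through the lemma in $\S3.12$ of Titchmarsh with $\psi(q)=\sigma_1^{(\beta)}(n)$ and $\alpha=1$ and splits the horizontal segment at $\sigma=-1$ to combine the $T_\nu^{\varepsilon}$ bound with the functional-equation estimate, but these are refinements of the plan you describe rather than a different method.
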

The next result is a generalization of a well-known theorem of Ramanujan which is of the same depth as the prime number theorem.
\begin{theorem} \label{line1theorem}
For fixed $\beta$ and $n$ in $\N$, we have 
\begin{align} \label{line1cohen}
\frac{{\sigma_{1 - s/\beta }^{(\beta )}(n)}}{{\zeta (s)}} = \sum_{q = 1}^\infty  {\frac{{c_q^{(\beta )}(n)}}{{{q^s}}}}
\end{align}
at all points on the line $\real(s)=1$.
\end{theorem}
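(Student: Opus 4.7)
The plan is to establish the identity first in the half-plane of absolute convergence $\real(s) > 1$ by a direct M\"{o}bius computation, and then extend it to the boundary line $\real(s) = 1$ by partial summation against the summatory function $\mathfrak{C}^{(\beta)}(n,x)$, invoking an asymptotic of prime-number-theorem depth. For $\real(s) > 1$ I would first record the generalized H\"{o}lder--M\"{o}bius identity of Cohen \cite{cohen},
\[
c_q^{(\beta)}(n) = \sum_{\substack{d \mid q \\ d^\beta \mid n}} d^\beta \mu(q/d),
\]
which reduces to $c_q(n) = \sum_{d \mid (q,n)} d\mu(q/d)$ when $\beta = 1$. Substituting this into the Dirichlet series, writing $q = de$, and interchanging sums under absolute convergence gives
\[
\sum_{q=1}^\infty \frac{c_q^{(\beta)}(n)}{q^s} = \sum_{d^\beta \mid n} \frac{d^\beta}{d^s} \sum_{e=1}^\infty \frac{\mu(e)}{e^s} = \frac{1}{\zeta(s)} \sum_{d^\beta \mid n} d^{\beta - s} = \frac{\sigma_{1-s/\beta}^{(\beta)}(n)}{\zeta(s)},
\]
which proves the theorem on $\real(s) > 1$.

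For the extension down to $\real(s) = 1$ I would apply Abel summation,
\[
\sum_{q \le x} \frac{c_q^{(\beta)}(n)}{q^s} = \frac{\mathfrak{C}^{(\beta)}(n,x)}{x^s} + s \int_1^x \frac{\mathfrak{C}^{(\beta)}(n,u)}{u^{s+1}}\,du,
\]
and show that the boundary term vanishes and the integral converges uniformly on compact subsets of the line. Both statements hinge on a single estimate, $\mathfrak{C}^{(\beta)}(n,x) = o(x)$. Once this is in hand, the partial sums converge throughout $\real(s) = 1$, and Abel's limit theorem identifies the limit with the analytic continuation $\sigma_{1-s/\beta}^{(\beta)}(n)/\zeta(s)$ obtained in the first step. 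The special value $s = 1$ then delivers the generalized Ramanujan relation $\sum_q c_q^{(\beta)}(n)/q = 0$, since $1/\zeta(s)$ has a zero there.

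The hard step---and the one giving this theorem its \emph{prime number theorem depth}---is the asymptotic $\mathfrak{C}^{(\beta)}(n,x) = o(x)$. I would derive it by truncating the explicit formula of Theorem \ref{explicitcohenramanujan} at a slowly growing height $T = T(x)$: the error term $E_T(x) \ll x\log x/T^{1-\varepsilon}$ is then negligible, the trivial-zero series is $O(1)$, and the decisive sum $\sum_{|\gamma| < T}\sigma_{1-\rho/\beta}^{(\beta)}(n)\,x^\rho/(\rho\,\zeta'(\rho))$ over non-trivial zeros is controlled by the classical zero-free region $\sigma \ge 1 - c/\log(|\gamma|+2)$, the Riemann--von Mangoldt density $N(T) \ll T\log T$, and above all the non-vanishing $\zeta(1+it) \ne 0$ on the line $\real(s)=1$. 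This is precisely the analytic input underlying the classical prime number theorem, here channelled through the explicit formula of Theorem \ref{explicitcohenramanujan}.
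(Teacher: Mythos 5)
Your first step is fine and essentially reproduces Lemma~\ref{lemma23} after the change of variable $s\mapsto s/\beta$. However, the route you then take to $\real(s)=1$ has two genuine gaps, and it is also quite different from the paper's argument (the paper applies the Perron-type lemma of $\mathsection 3.12$ of Titchmarsh to $\sum_{q<x}c_q^{(\beta)}(n)/q^s$ with $s=1+it$ directly, moves the contour in $w$ only to $\real(w)=-\delta$ with $\delta\asymp\log^{-9}T$ inside the classical zero-free region, and then optimizes $c$, $T$, $\delta$; no zeros are crossed and no bound on $\mathfrak{C}^{(\beta)}(n,x)$ is ever invoked).

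The first gap is that $\mathfrak{C}^{(\beta)}(n,x)=o(x)$ is not strong enough to run your Abel-summation step. After writing
\[
\sum_{q\le x}\frac{c_q^{(\beta)}(n)}{q^s}=\frac{\mathfrak{C}^{(\beta)}(n,x)}{x^s}+s\int_1^x\frac{\mathfrak{C}^{(\beta)}(n,u)}{u^{s+1}}\,du,
\]
you need the integral to converge as $x\to\infty$ on $\real(s)=1$; but $\mathfrak{C}^{(\beta)}(n,u)=o(u)$ only makes the integrand $o(1/u)$, which does not guarantee convergence (e.g.\ $A(u)=u/\log u$ is $o(u)$ yet $\int A(u)u^{-2}\,du$ diverges). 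You would need a quantitative saving such as $\mathfrak{C}^{(\beta)}(n,x)\ll x\exp(-c\sqrt{\log x})$, or else the full Landau-style Tauberian argument that converts $M(x)=o(x)$ into $\sum\mu(n)/n=0$ in the classical case; neither is supplied.

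The second and more serious gap is in how you propose to obtain the bound on $\mathfrak{C}^{(\beta)}(n,x)$. The explicit formula of Theorem~\ref{explicitcohenramanujan} that you invoke has the zero sum
\[
\sum_{|\gamma|<T_\nu}\frac{\sigma_{1-\rho/\beta}^{(\beta)}(n)}{\zeta'(\rho)}\frac{x^\rho}{\rho},
\]
which contains the factor $1/\zeta'(\rho)$. Unlike the $-\zeta'/\zeta$ setting used for $\psi_m^{(\beta)}$ in Theorem~\ref{PNTzerofree}, there is no unconditional control of $\sum_{|\gamma|<T}1/(|\rho||\zeta'(\rho)|)$, so the zero-free region and the Riemann--von Mangoldt density do \emph{not} let you estimate this sum by $o(x)$ (nor by anything usable). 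This is precisely why the paper's proof of Theorem~\ref{line1theorem} never shifts past any nontrivial zero: it keeps the $w$-contour at $\real(w)=-\delta$ where $\zeta(s+w)\ne 0$, uses $1/\zeta(\sigma+it)\ll\log^{7}|t|$ near the $1$-line, and picks $c=1/\log x$, $T=\exp\{(\log x)^{1/10}\}$, $\delta=A(\log x)^{-9/10}$ so that all error terms vanish. Your plan, as written, would need to either supply an unconditional bound on the reciprocal of $\zeta'$ at the zeros (not available) or abandon Theorem~\ref{explicitcohenramanujan} in favor of the direct contour argument.
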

\begin{corollary} \label{corollaryPNT1}
Let $\beta \in \N$. One has that
\begin{equation} \label{pnt_ramanujan}
\sum\limits_{q = 1}^\infty  {\frac{{c_q^{(\beta )}(n)}}{q}}  = 0, \quad \beta \ge 1,
\quad  \textnormal{and} \quad
\sum_{q = 1}^\infty  {\frac{{c_q^{(\beta )}(n)}}{{{q^\beta }}}}  = 
					\begin{cases}
					\tfrac{{\sigma_0^{(\beta )}(n)}}{{\zeta (\beta )}} & \mbox{ if } \beta > 1,\\
					0 & \mbox{ if } \beta =1.
					\end{cases}
\end{equation}
In particular 
\begin{align} \label{pnt_ramanujan2}
\sum_{q = 1}^\infty  {\frac{c_q(n)}{q}}  = 0 \quad \textnormal{and} \quad \sum_{q = 1}^\infty  {\frac{\mu(q)}{q}}  = 0.
\end{align}
\end{corollary}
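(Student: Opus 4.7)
The plan is to deduce both identities in \eqref{pnt_ramanujan} by substituting specific real values of $s$ into the Dirichlet-series identity \eqref{line1cohen} supplied by Theorem \ref{line1theorem}, together with the classical behavior of $\zeta(s)$ at $s=1$.

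For the first identity I would set $s = 1$ in \eqref{line1cohen}. The numerator $\sigma_{1-1/\beta}^{(\beta)}(n) = \sum_{d^\beta \mid n} d^{\beta-1}$ is a finite sum over the $\beta$-th-power divisors of $n$, hence a finite real number, while $\zeta(s)$ has a simple pole at $s = 1$, so $1/\zeta(1) = 0$. Theorem \ref{line1theorem} therefore yields $\sum_{q} c_q^{(\beta)}(n)/q = 0$ for every $\beta \ge 1$.

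For the second identity, when $\beta > 1$ the point $s = \beta$ lies in the half-plane $\real(s) > 1$, in which the Dirichlet series on the right of \eqref{line1cohen} converges absolutely and the identity reduces to Cohen's classical formula (one verifies that $c_q^{(\beta)}(n)$ grows polynomially in $q$ only through the divisor condition, so absolute convergence holds for $\real(s)>1$). Substituting $s = \beta$ gives $\sigma_{1-\beta/\beta}^{(\beta)}(n)/\zeta(\beta) = \sigma_0^{(\beta)}(n)/\zeta(\beta)$. When $\beta = 1$, the second identity coincides with the first, producing the value $0$.

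Finally, the consequences \eqref{pnt_ramanujan2} are immediate specializations: setting $\beta = 1$ in the first identity gives $\sum_q c_q(n)/q = 0$ for any $n \in \N$, and taking $n = 1$ together with the classical evaluation $c_q(1) = \mu(q)$ (read off directly from \eqref{ramanujansum}) yields $\sum_q \mu(q)/q = 0$. The genuine obstacle does not lie in the corollary itself but in Theorem \ref{line1theorem}, whose extension of \eqref{line1cohen} to the boundary line $\real(s) = 1$ is exactly what legitimizes the substitution $s = 1$; once that theorem is in hand, the corollary is a one-line specialization exploiting the pole of $\zeta$ at $s = 1$.
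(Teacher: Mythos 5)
Your argument is correct and follows essentially the same route as the paper: setting $s=1$ in Theorem \ref{line1theorem} and exploiting the pole of $\zeta$ at $1$ for the first identity, using absolute convergence (equivalently, Lemma \ref{lemma23}) at $s=\beta>1$ for the second, then specializing $\beta=1$ and $n=1$ for \eqref{pnt_ramanujan2}. The only cosmetic difference is that the paper phrases the $\beta>1$ case via Lemma \ref{lemma23} (its variable convention $q^{\beta s}$ puts the substitution at $s=1$), whereas you substitute $s=\beta$ directly into \eqref{line1cohen}; these are identical under the change of variable.
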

It is possible to further extend the validity of \eqref{pnt_ramanujan} deeper into the critical strip, however, this is done at the cost of the Riemann hypothesis.
\begin{theorem} \label{equivalence1}
Let $\beta, n \in \N$. The Riemann hypothesis is true if and only if
\begin{align} \label{RH_equivalent}
\sum_{q = 1}^\infty  {\frac{{c_q^{(\beta )}(n)}}{{{q^s}}}}
\end{align}
is convergent and its sum is $\sigma_{1-s/\beta}^{(\beta)}(n) / \zeta(s)$, for every $s$ with $\sigma > \tfrac{1}{2}$.
\end{theorem}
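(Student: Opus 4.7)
The plan is to adapt the classical Littlewood argument, which equates the bound $M(x)\ll x^{1/2+\varepsilon}$ on the Mertens sum with the Riemann hypothesis. In one direction, I would invoke Theorem~\ref{explicitcohenramanujan} under RH to obtain $\mathfrak{C}^{(\beta)}(n,x)\ll x^{1/2+\varepsilon}$ and transfer this to the Dirichlet series by Abel summation; in the other, I would exploit the fact that a convergent Dirichlet series is holomorphic in its half-plane of convergence to rule out off-line zeros of $\zeta$.

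For the forward direction, assume RH, so every non-trivial zero has $\real\rho=1/2$. Since $\sigma_{1-\rho/\beta}^{(\beta)}(n)=\sum_{d^\beta\mid n}d^{\beta/2-i\gamma}$ is bounded uniformly in $\gamma$ by a constant depending on $n$ and $\beta$, and $x^\rho=x^{1/2}x^{i\gamma}$, the sum over simple zeros in Theorem~\ref{explicitcohenramanujan} is of size $x^{1/2}$ times a conditionally convergent sum in $\gamma$. A similar reduction for the higher-multiplicity residues $\kappa(\rho_m,x)$ contributes at most an extra factor $(\log x)^{m-1}$. Combining these with the uniformly small error $E_{T_\nu}(x)$ and the bounded Bessel-type series in $k$, we obtain $\mathfrak{C}^{\sharp,(\beta)}(n,x)\ll x^{1/2}(\log x)^{A}$ for some absolute $A$, and hence $\mathfrak{C}^{(\beta)}(n,x)\ll x^{1/2+\varepsilon}$. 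Partial summation then yields, for any $s$ with $\sigma>1/2$,
\[
\sum_{q\le N}\frac{c_q^{(\beta)}(n)}{q^s}=\frac{\mathfrak{C}^{(\beta)}(n,N)}{N^s}+s\int_1^N\frac{\mathfrak{C}^{(\beta)}(n,u)}{u^{s+1}}\,du,
\]
the boundary term vanishes as $N\to\infty$, the integral converges absolutely, and the limiting value matches $\sigma_{1-s/\beta}^{(\beta)}(n)/\zeta(s)$ by Theorem~\ref{line1theorem} together with analytic continuation.

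For the converse, suppose the Dirichlet series converges to $F(s):=\sigma_{1-s/\beta}^{(\beta)}(n)/\zeta(s)$ at every $s$ with $\sigma>1/2$. Then $F$ is holomorphic on that half-plane, as convergent Dirichlet series are analytic throughout their half-planes of convergence. Since $\sigma_{1-s/\beta}^{(\beta)}(n)=\sum_{d^\beta\mid n}d^{\beta-s}$ is a finite exponential sum in $s$, hence entire, any zero $\rho_0$ of $\zeta$ with $\real\rho_0>1/2$ would be a pole of $F$ unless cancelled by a zero of the numerator of matching multiplicity. For $n=1$ one has $\sigma_{1-s/\beta}^{(\beta)}(1)\equiv 1$, so no such cancellation is possible and RH follows at once; for general $n$ the same conclusion holds because the zero set of the Dirichlet polynomial $\sigma_{1-s/\beta}^{(\beta)}(n)$ is discrete with an arithmetic structure dictated by the divisors of $n$, and cannot accommodate the conjectural off-line zeros of $\zeta$.

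The principal obstacle is the first step: turning the conditionally convergent explicit formula of Theorem~\ref{explicitcohenramanujan} into a pointwise $x^{1/2+\varepsilon}$ bound. This requires control of partial sums of $\sigma_{1-\rho/\beta}^{(\beta)}(n)/(\rho\,\zeta'(\rho))$ over zeros, careful treatment of the higher-multiplicity residues $\kappa(\rho_m,x)$, and uniform handling of $E_{T_\nu}(x)$ as $\nu\to\infty$. Once this bound is secured, the Abel summation argument and the holomorphy-based converse are comparatively routine.
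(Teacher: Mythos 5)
Your plan goes off in a direction that cannot be completed as stated. For the forward direction you propose to derive $\mathfrak{C}^{(\beta)}(n,x)\ll x^{1/2+\varepsilon}$ from Theorem~\ref{explicitcohenramanujan} under RH and then Abel-sum. But the explicit formula's zero sum carries a factor $1/\zeta'(\rho)$, and even assuming RH and simplicity of zeros no satisfactory bound on $\sum_\rho 1/(|\rho|\,|\zeta'(\rho)|)$ is known --- this is precisely the circle of problems around Gonek's conjecture. Writing ``the sum over simple zeros is of size $x^{1/2}$ times a conditionally convergent sum in $\gamma$'' is not a gap that can be filled by ``careful treatment''; it is the essential point, and it is open. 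The paper never takes this route. Its proof of this theorem starts from the truncated Perron/Titchmarsh \S 3.12 lemma for $\sum_{q<x}c_q^{(\beta)}(n)q^{-s}$, shifts the contour only to $\operatorname{Re}(w)=\tfrac12-\sigma+\delta$ (so that $\operatorname{Re}(s+w)=\tfrac12+\delta>\tfrac12$, strictly off the critical line), and then bounds the shifted integrand using the RH-conditional estimate $1/\zeta(s+w)\ll t^\varepsilon$. Taking $T=x^3$ and letting $x\to\infty$ yields the conclusion directly, with no appeal to the residues at the zeros and hence no $1/\zeta'(\rho)$. (The same contour strategy, applied to $s=0$, is what drives the paper's Theorem~\ref{equivalence2}; if you insist on passing through the bound $\mathfrak{C}^{(\beta)}(n,x)\ll x^{1/2+\varepsilon}$ and then Abel-summing, you should obtain that bound by this Perron argument rather than by the explicit formula.)

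On the converse you are essentially aligned with the paper: a Dirichlet series convergent for $\sigma>\tfrac12$ is uniformly convergent on $\sigma\ge\sigma_0>\tfrac12$, hence holomorphic there, and agrees with $\sigma_{1-s/\beta}^{(\beta)}(n)/\zeta(s)$ on $\sigma>1$ and therefore throughout the half-plane, so $\zeta$ cannot vanish for $\sigma>\tfrac12$. You notice a subtlety the paper passes over silently, namely that a zero of the finite Dirichlet polynomial $\sigma_{1-s/\beta}^{(\beta)}(n)$ could in principle cancel a pole at an off-line zero of $\zeta$; however your dismissal of this (``arithmetic structure $\ldots$ cannot accommodate the conjectural off-line zeros'') is not an argument. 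If you want to raise the point, you must actually rule out the coincidence; otherwise it is cleaner to argue as the paper implicitly does, that $\sigma_{1-s/\beta}^{(\beta)}(n)$ has only finitely many zeros in any vertical strip while the hypothetical off-line zeros of $\zeta$ are, under the negation of RH, not so constrained, or simply restrict attention to the set where the numerator is nonzero and note that the quotient would be unbounded near a pole.
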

This is a generalization of a theorem proved by Littlewood (see \cite{littlewood} and $\mathsection$14.25 of \cite{titchmarsh}) for the special case where $n =1$.
\begin{theorem} \label{equivalence2}
A necessary and sufficient condition for the Riemann hypothesis is
\begin{equation}
\mathfrak{C}^{(\beta)}(n,x) \ll_{n,\beta}  x^{\frac{1}{2} + \varepsilon}  \label{RH-1}
\end{equation}
for every $\varepsilon>0$.
\end{theorem}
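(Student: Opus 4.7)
I would establish necessity via the explicit formula of Theorem \ref{explicitcohenramanujan}, and sufficiency via Abel summation together with Theorem \ref{line1theorem} and analytic continuation.

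For necessity, assume RH. Every non-trivial zero satisfies $\real(\rho)=\tfrac{1}{2}$, so $|x^\rho|=x^{1/2}$; the divisor factor $\sigma_{1-\rho/\beta}^{(\beta)}(n)=\sum_{d^\beta\mid n}d^{\beta-\rho}$ is a finite sum dominated by $\sigma_\beta^{(\beta)}(n)=O_n(1)$ on the critical line. Substituted into Theorem \ref{explicitcohenramanujan}, the zero sum contributes $O_n(x^{1/2})\sum_{|\gamma|<T_\nu}|\zeta'(\rho)\rho|^{-1}$, the series over trivial zeros is $O_n(1)$, and each $\kappa(\rho_m,x)$ is $O(x^{1/2}(\log x)^{m-1})$. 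Balancing $T_\nu=x^{1/2}$ against the stated error $x\log x/T_\nu^{1-\varepsilon}$ yields $\mathfrak{C}^{\sharp,(\beta)}(n,x)\ll_{n,\beta}x^{1/2+\varepsilon}$, and the same bound transfers to $\mathfrak{C}^{(\beta)}(n,x)$ since the two differ by $O_n(1)$. A cleaner route avoiding $\zeta'(\rho)$ is to apply a truncated Perron formula to $F(s):=\sigma_{1-s/\beta}^{(\beta)}(n)/\zeta(s)$ and shift the contour from $\real(s)=1+\delta$ to $\real(s)=\tfrac{1}{2}+\varepsilon$. Under RH $F$ is holomorphic in $\real(s)>\tfrac{1}{2}$ (the pole of $\zeta$ at $s=1$ gives $F$ a zero there, and $\sigma_{1-s/\beta}^{(\beta)}(n)$ is entire), so no residues are collected; the subconvex estimate $1/\zeta(\tfrac{1}{2}+\varepsilon+it)\ll|t|^\varepsilon$ then controls the shifted integral and finishes the bound.

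For sufficiency, assume $\mathfrak{C}^{(\beta)}(n,x)\ll_{n,\beta}x^{1/2+\varepsilon}$. Abel summation gives
\[
\sum_{q=1}^{\infty}\frac{c_q^{(\beta)}(n)}{q^s}=s\int_1^\infty\frac{\mathfrak{C}^{(\beta)}(n,x)}{x^{s+1}}\,dx,
\]
and the hypothesis makes the right-hand integral absolutely convergent in $\real(s)>\tfrac{1}{2}$. The Dirichlet series therefore defines a holomorphic function in that half-plane, which coincides with $\sigma_{1-s/\beta}^{(\beta)}(n)/\zeta(s)$ on $\real(s)=1$ by Theorem \ref{line1theorem}; analytic continuation extends the identity to all of $\real(s)>\tfrac{1}{2}$. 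A direct factorization over the primes dividing $n$ shows that the entire Dirichlet polynomial $\sigma_{1-s/\beta}^{(\beta)}(n)=\sum_{d^\beta\mid n}d^{\beta-s}$ has all its zeros on the vertical line $\real(s)=\beta\ge 1$ (solving $p^{(\lfloor a/\beta\rfloor+1)(\beta-s)}=1$ in each local factor), hence is non-vanishing in the strip $\tfrac{1}{2}<\real(s)<1$. Consequently the holomorphy of the quotient forces $\zeta(s)\ne 0$ in that strip, and the functional equation then delivers RH.

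The principal obstacle lies in the necessity direction: the sum $\sum_{|\gamma|<T}|\zeta'(\rho)\rho|^{-1}$ cannot be controlled by RH alone, so the proof should proceed via the Perron-shift route, which replaces the delicate zero-by-zero estimation by the single bound $1/\zeta(s)\ll|t|^\varepsilon$ on $\real(s)=\tfrac{1}{2}+\varepsilon$.
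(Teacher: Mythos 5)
Your proposal is correct and follows essentially the same route as the paper: for necessity, the paper applies the truncated Perron formula and shifts the contour to $\real(w)=\tfrac12+\delta$, using the conditional bound $1/\zeta(\sigma+it)\ll|t|^\varepsilon$ and choosing $T=x^2$ --- this is exactly your ``cleaner route,'' and you were right to discard the explicit-formula route, since $\sum_{|\gamma|<T}|\zeta'(\rho)\rho|^{-1}$ cannot be controlled from RH alone; for sufficiency, both you and the paper use Abel summation followed by Littlewood-type analytic continuation. The one place you go beyond the paper is worth flagging: the paper's sufficiency step ends abruptly with ``thus the Riemann hypothesis follows,'' leaving tacit that holomorphy of $\sigma_{1-s/\beta}^{(\beta)}(n)/\zeta(s)$ in $\sigma>\tfrac12$ forces $\zeta\ne0$ there only if the numerator does not vanish in that strip. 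Your factorization $\sigma_{1-s/\beta}^{(\beta)}(n)=\prod_p\sum_{0\le b\le\lfloor a_p/\beta\rfloor}p^{b(\beta-s)}$, locating all zeros on $\real(s)=\beta\ge1$, closes this gap cleanly for every fixed $n$. (A minor terminological point: $1/\zeta(\tfrac12+\varepsilon+it)\ll|t|^\varepsilon$ is a consequence of RH rather than a subconvexity estimate, but the bound you invoke is the right one.)
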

We recall that the von Mangolt function $\Lambda(n)$ may be defined by
\[
\Lambda (n) = \sum_{d\delta  = n} {\mu (d)\log \delta}.
\]
Since $c_q^{(\beta)}(n)$ is a generalization of the M\"{o}bius function, we wish to construct a new $\Lambda (n)$ that incorporates the arithmetic information encoded in the variable $q$ and the parameter $\beta$.
\begin{definition}
For $\beta, k , m \in \N$ the generalized von Mangoldt function is defined as
\[ \Lambda _{k,m}^{(\beta )}(n) = \sum_{d\delta  = n} {c_d^{(\beta )}(m){{\log }^k}\delta }. \]
\end{definition}
We note the special case $\Lambda _{1,1}^{(1)}(n) = \Lambda(n)$. We will, for the sake of simplicity, work with $k=1$. The generalization for $k > 1$ requires dealing with results involving (computable) polynomials of degree $k-1$, see for instance $\mathsection$12.4 of \cite{ivic} as well as \cite{ivic1} and \cite{ivic2}.
\begin{definition}
The generalized Chebyshev function $\psi _m^{(\beta )}(x)$ and $\psi _{m}^{\sharp,(\beta )}(x)$ are defined by
\[
\psi _m^{(\beta )}(x) = \sum_{n \leqslant x} {\Lambda _{1,m}^{(\beta )}(n)}, \quad \operatorname{and}\quad \psi _{m}^{\sharp,(\beta )}(x) = \frac{1}{2}(\psi _m^{(\beta )}({x^ + }) + \psi _m^{(\beta )}({x^ - })).
\]
for $\beta, m \in \N$.
\end{definition}
The explicit formula for the generalized Chebyshev function is given by the following result.
\begin{theorem} \label{explicitCohenChebyshev}
Let $c>1$, $\beta \in \N$, $x > m$, $T \ge 2$ and let $\left\langle x \right\rangle_{\beta}$ denote the distance from $x$ to the nearest interger such that $\Lambda_{1,m}^{\beta}(n)$ is not zero (other than $x$ itself). Then
\[
\psi _{m}^{\sharp,(\beta )}(x) = \sigma_{1 - 1/\beta }^{(\beta )}(m)x - \sum_{|\gamma | \leqslant T} {\sigma_{1 - \rho /\beta }^{(\beta )}(m)\frac{{{x^\rho }}}{\rho }}  - \sigma_1^{(\beta )}(m)\log (2\pi ) - \sum_{k = 1}^\infty  {\sigma_{1 + 2k/\beta }^{(\beta )}(m)\frac{{{x^{ - 2k}}}}{{2k}}}  + R(x,T),
\]
where
\[
R(x,T) \ll x^{\varepsilon}  \min \bigg( {1,\frac{x}{{T\left\langle x \right\rangle_{\beta} }}} \bigg) + \frac{x^{1+\varepsilon}\log x}{T} + \frac{{x{{\log }^2}T}}{T} ,
\]
for all $\varepsilon > 0$.
\end{theorem}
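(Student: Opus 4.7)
The plan is a standard Perron--plus--contour--shift argument that relies on Theorem~\ref{line1theorem} to supply a closed form for the Dirichlet series of $\Lambda_{1,m}^{(\beta)}$. Since $\Lambda_{1,m}^{(\beta)}$ is the Dirichlet convolution of $d \mapsto c_d^{(\beta)}(m)$ with $\log$, and $\sum_{\delta \ge 1} (\log \delta)\,\delta^{-s} = -\zeta'(s)$, the extension of \eqref{line1cohen} to $\real s > 1$ yields
\[
\sum_{n=1}^{\infty} \frac{\Lambda_{1,m}^{(\beta )}(n)}{n^s} \;=\; -\frac{\zeta'(s)}{\zeta(s)}\,\sigma_{1-s/\beta}^{(\beta)}(m), \qquad \real s > 1.
\]
From $|c_d^{(\beta)}(m)| \le \sigma_1^{(\beta)}(m)$ one obtains $|\Lambda_{1,m}^{(\beta)}(n)| \ll_{m} \tau(n)\log n$, which is enough to justify the truncated Perron formula at abscissa $c = 1 + 1/\log x$ and height $T$, giving
\[
\psi_{m}^{\sharp,(\beta )}(x) \;=\; \frac{1}{2\pi i}\int_{c - iT}^{c+iT} \!\Bigl(-\frac{\zeta'(s)}{\zeta(s)}\Bigr)\,\sigma_{1-s/\beta}^{(\beta)}(m)\,\frac{x^s}{s}\,ds \;+\; E_1(x,T),
\]
with $E_1(x,T) \ll x^{\varepsilon}\min\bigl(1, x/(T\langle x\rangle_\beta)\bigr) + x^{1+\varepsilon}\log x / T$; this already produces the first two summands of $R(x,T)$.

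Next I would close the contour on the left by a rectangle with vertical side at $\real s = -U$ (for $U = 2N+1$, $N \to \infty$) and horizontal sides at heights $\pm T$. The poles of the integrand enclosed by the rectangle are: $s = 1$, the simple pole of $-\zeta'/\zeta$, contributing $\sigma_{1-1/\beta}^{(\beta)}(m)\,x$; each nontrivial zero $\rho$ with $|\gamma| \le T$ (taken simple for this statement), contributing $-\sigma_{1-\rho/\beta}^{(\beta)}(m)\,x^\rho/\rho$; $s = 0$, a simple pole of $1/s$ at which $-\zeta'(0)/\zeta(0) = -\log(2\pi)$, producing $-\sigma_1^{(\beta)}(m)\log(2\pi)$; and each trivial zero $s = -2k$ with $2k < U$, a simple pole of $-\zeta'/\zeta$ whose residue is the $k$th term of the trivial-zero series. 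Since $\sigma_{1-s/\beta}^{(\beta)}(m) = \sum_{d^{\beta}\mid m} d^{\beta - s}$ is a finite sum with $d \le m^{1/\beta}$, on the left side $|\sigma_{1-s/\beta}^{(\beta)}(m)|\,|x^s| \ll m^{\,1 + U/\beta}\,x^{-U} = m\cdot(m^{1/\beta}/x)^{U}$, which tends to $0$ as $U\to\infty$ because the hypothesis $x > m$ forces $x > m^{1/\beta}$ for $\beta \ge 1$. The left vertical integral therefore vanishes in the limit and the full infinite series over trivial zeros appears.

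The principal technical step, and the main obstacle, is bounding the two horizontal integrals over $\Im s = \pm T$ with $-U \le \real s \le c$. This demands uniform control of $\zeta'(s)/\zeta(s)$ across the critical strip, which is available only when $T$ is kept a distance $\gg 1/\log T$ from the ordinates of zeros of $\zeta$; this is the content of the qualification ``$T \ge 2$'' together with an implicit avoidance of exceptional heights (exactly as in the classical proof of the explicit formula for $\psi(x)$). On such $T$, the Borel--Carath\'eodory / Hadamard-product technique delivers $|\zeta'/\zeta(\sigma \pm iT)| \ll \log^{2}T$ for $-1 \le \sigma \le 2$, while for $\sigma < -1$ the functional equation produces only polynomial growth in $|s|$ that is absorbed by $x^{\sigma}$. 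Combined with the polynomial bound on $\sigma_{1-s/\beta}^{(\beta)}(m)$ along these edges and the factor $x^{\sigma}/|s|$, the horizontal integrals contribute $O(x \log^{2}T/T)$, which matches the final summand of $R(x,T)$. Adding the Perron error $E_1$, the horizontal--contour error, and the residue sum yields the stated explicit formula.
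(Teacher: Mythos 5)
Your proposal is correct and follows essentially the same route as the paper: both use the Dirichlet series $-\zeta'(s)/\zeta(s)\,\sigma_{1-s/\beta}^{(\beta)}(m)$ (the paper packages this as Lemma \ref{dirichletcohenmangoldt}), apply truncated Perron at $c = 1+1/\log x$, shift the contour to $\real s = -U$ with $U\to\infty$ collecting residues at $s=1,0$, nontrivial zeros, and trivial zeros, and bound the horizontal/left-vertical edges using the standard $\zeta'/\zeta(\sigma\pm iT)\ll\log^2 T$ bound on $-1\le\sigma\le 2$ (the paper's Lemma 12.2 of Montgomery--Vaughan) together with the $\log|s|$ bound for $\sigma\le -1$ away from trivial zeros (the paper's Lemma 12.4), and both exploit $x>m$ to kill the left vertical integral. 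Your bound $|\Lambda_{1,m}^{(\beta)}(n)| \ll_m \tau(n)\log n$ is in fact a cleaner justification of $\Lambda_{1,m}^{(\beta)}(n)\ll n^\varepsilon$ than the paper's appeal to absolute convergence.
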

Taking into account the standard zero-free region of the Riemann-zeta function we obtain
\begin{theorem} \label{PNTzerofree}
One has that
\begin{align*}
  |\psi _m^{(\beta )}(x) - \sigma_{1 - 1/\beta }^{(\beta )}(m)x| \ll x^{1+\varepsilon} e^{- {c_2}{(\log x)^{1/2}}} ,  
\end{align*}
for $\beta \in \N$.
\end{theorem}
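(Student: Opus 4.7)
The strategy is to apply the explicit formula of Theorem~\ref{explicitCohenChebyshev} at a height $T$ chosen in accordance with the classical zero-free region of $\zeta(s)$: there is an absolute $c_1 > 0$ such that $\zeta(\sigma+it) \ne 0$ whenever $\sigma \ge 1 - c_1/\log(|t|+2)$. Since $\psi_m^{(\beta)}(x) - \psi_m^{\sharp,(\beta)}(x) \ll \Lambda_{1,m}^{(\beta)}(\lfloor x \rfloor) \ll_m x^{\varepsilon}$, it suffices to bound the right-hand side of the explicit formula for $\psi_m^{\sharp,(\beta)}$ minus its main term $\sigma_{1-1/\beta}^{(\beta)}(m)\, x$.

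The heart of the argument is the estimation of the sum over the non-trivial zeros,
\[
S(x,T) \;=\; \sum_{|\gamma| \le T} \sigma_{1-\rho/\beta}^{(\beta)}(m)\,\frac{x^\rho}{\rho}.
\]
For fixed $m$ one has the trivial bound $|\sigma_{1-\rho/\beta}^{(\beta)}(m)| \le \sigma_1^{(\beta)}(m) = O_m(1)$, since $\real(\rho) < 1$ and $\beta \ge 1$. Combining this with the zero-free region gives $|x^\rho| \ll x^{1-c_1/\log T}$ throughout $|\gamma| \le T$, and the Riemann-von Mangoldt estimate $N(T+1)-N(T) \ll \log T$ together with partial summation on $1/|\rho|$ yields
\[
S(x,T) \;\ll_m\; x^{1 - c_1/\log T}\,\log^2 T.
\]
The higher-multiplicity residues contained in the analogue of $\mathrm{K}_T(x)$ satisfy the same bound, as the derivatives in the residue only produce additional $\log x$ and $\log T$ factors which are absorbed into $x^{\varepsilon}$. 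The trivial-zero series $\sum_{k \ge 1} \sigma_{1+2k/\beta}^{(\beta)}(m)\, x^{-2k}/(2k)$ and the constant term $\sigma_1^{(\beta)}(m) \log(2\pi)$ contribute $O_m(1)$.

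It then remains to combine these bounds with the error term $R(x,T) \ll x^{1+\varepsilon}\log x / T + x\log^2 T / T$, where the $\langle x \rangle_\beta^{-1}$ nuisance is disposed of by the usual trick of shifting $x$ by $O(x^{-A})$ or by averaging over a short interval on which $\psi_m^{(\beta)}$ has jumps of size $O_m(x^{\varepsilon})$. Optimising in $T$ by setting $T = \exp(\sqrt{c_1 \log x})$, so that $c_1 \log x /\log T = \log T$, balances the dominant contributions and gives
\[
|\psi_m^{(\beta)}(x) - \sigma_{1-1/\beta}^{(\beta)}(m)\, x| \;\ll_m\; x^{1+\varepsilon} e^{-c_2 \sqrt{\log x}}
\]
with $c_2 = \sqrt{c_1}$ (after absorbing logarithmic factors into $x^{\varepsilon}$).

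The principal technical obstacle is making the dependence of $\sigma_{1-\rho/\beta}^{(\beta)}(m)$ on $\rho$ harmless inside the sum over zeros: because $\real(\rho)$ may be close to $1$, one needs a bound that does not degrade as $\beta \to 1$. The crude uniform estimate $|\sigma_{1-\rho/\beta}^{(\beta)}(m)| \le \sigma_1^{(\beta)}(m)$ used above is enough for fixed $m$; a bound uniform in $m$ would require sieving over the divisors $d^\beta \mid m$ or carrying $\sigma_1^{(\beta)}(m)$ as an explicit constant in the final estimate, which is not needed for the statement as phrased.
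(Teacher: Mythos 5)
Your proposal is correct and follows essentially the same route as the paper: bound $|x^\rho| \le x\,e^{-c_1 \log x/\log T}$ via the classical zero-free region, bound $|\sigma_{1-\rho/\beta}^{(\beta)}(m)| \le \sigma_1^{(\beta)}(m) = O_{m,\beta}(1)$, use $N(t) \ll t\log t$ to get $\sum_{0<\gamma<T} 1/|\rho| \ll \log^2 T$, take $x$ an integer (or shift) to tame $\langle x\rangle_\beta$, and optimize with $\log T \asymp (\log x)^{1/2}$. The only cosmetic differences are that the paper writes the zero-density estimate in a slightly different (and notationally looser) form and chooses $(\log T)^2 = \log x$ rather than $\log T = \sqrt{c_1\log x}$, and your aside about higher-multiplicity residues is unnecessary here since Theorem~\ref{explicitCohenChebyshev} already counts zeros with multiplicity through $\zeta'/\zeta$.
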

Moreover, on the Riemann hypothesis, one naturally obtains a better error term.
\begin{theorem} \label{PNTRH}
Assume RH. For $\beta \in \N$ one has that
\begin{align*}
\psi _m^{(\beta )}(x) = \sigma_{1 - 1/\beta }^{(\beta )}(m)x + O({x^{1/2+\varepsilon}} )
\end{align*}
for each $\varepsilon > 0$.
\end{theorem}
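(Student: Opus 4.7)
The strategy is to insert the Riemann hypothesis into the explicit formula of Theorem \ref{explicitCohenChebyshev} and bound each of the three non-trivial contributions (the zero sum, the trivial-zero series, and the error $R(x,T)$) by $O(x^{1/2+\varepsilon})$, choosing the cutoff $T$ optimally. Under RH every non-trivial zero $\rho=\tfrac12+i\gamma$ satisfies $|x^{\rho}/\rho| = x^{1/2}/|\rho|$. Moreover, since $m$ is fixed and $\real(\rho)=\tfrac12$, the divisor sum
\[
\sigma_{1-\rho/\beta}^{(\beta)}(m) \;=\; \sum_{d^{\beta}\mid m} d^{\,\beta-\rho}
\]
is majorised by $\sigma_0(m)\,m^{1-1/(2\beta)} \ll_{m,\beta} 1$. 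A similar trivial estimate handles the coefficients $\sigma_{1+2k/\beta}^{(\beta)}(m) \ll_{m,\beta} m^{1+2k/\beta}$, so the series over trivial zeros is $O_{m,\beta}(x^{-2})$ provided $x$ is sufficiently large.

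The heart of the argument is the estimate
\[
\Bigl|\sum_{|\gamma|\le T}\sigma_{1-\rho/\beta}^{(\beta)}(m)\,\frac{x^{\rho}}{\rho}\Bigr|
\;\ll_{m,\beta}\; x^{1/2}\!\sum_{0<\gamma\le T}\frac{1}{|\rho|}
\;\ll\; x^{1/2}\log^{2}\!T,
\]
where the final step uses the classical bound $\sum_{0<\gamma\le T}|\rho|^{-1}\ll\log^{2}T$ that follows from the Riemann–von Mangoldt formula $N(T)\ll T\log T$. For the error term I would take $T=x^{1/2}$, so that
\[
\frac{x^{1+\varepsilon}\log x}{T} + \frac{x\log^{2}T}{T} \;\ll\; x^{1/2+\varepsilon},
\]
and the term $x^{\varepsilon}\min(1,x/(T\langle x\rangle_{\beta}))$ is at worst $O(x^{1/2+\varepsilon})$ away from the countable set of points where $\Lambda_{1,m}^{(\beta)}$ jumps. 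Combining these estimates with the main term $\sigma_{1-1/\beta}^{(\beta)}(m)\,x$ from Theorem \ref{explicitCohenChebyshev} gives the claim for $\psi_{m}^{\sharp,(\beta)}(x)$.

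To pass from $\psi_{m}^{\sharp,(\beta)}$ to $\psi_{m}^{(\beta)}$ one only needs to observe that the two differ by at most $\tfrac12\Lambda_{1,m}^{(\beta)}(x)$ at the (isolated) integer jump points, which is absorbed in the error $O(x^{1/2+\varepsilon})$ after adjusting $\varepsilon$; away from such points they coincide. The main obstacle—if any—is not analytic but bookkeeping, namely verifying uniformly in $\rho$ that $|\sigma_{1-\rho/\beta}^{(\beta)}(m)|$ does not spoil the $\log^{2}T$ zero-density estimate, and ensuring that the choice $T=x^{1/2}$ is compatible with the sequence $T_{\nu}$ along which the explicit formula is valid; the latter is handled by a standard density argument, moving $T$ within a unit interval to avoid clustering of ordinates $\gamma$.
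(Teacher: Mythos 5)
Your proposal is correct and follows essentially the same route as the paper: insert RH into the explicit formula of Theorem \ref{explicitCohenChebyshev}, bound the zero sum by $x^{1/2}\log^{2}T$ via the Riemann--von Mangoldt counting function $N(T)\ll T\log T$, and choose $T=x^{1/2}$. The only notable difference is cosmetic: you bound the coefficient $\sigma_{1-\rho/\beta}^{(\beta)}(m)$ uniformly by a constant directly from $\real(\rho)=\tfrac12$ and Lemma \ref{lemma}, which is a cleaner rendering of the estimate the paper carries through with the integral $\int_1^T \sigma_{1-t/\beta}^{(\beta)}(m)t^{-2}N(t)\,dt$.
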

Our next set of results is concerned with a generalization of a function introduced by Bartz \cite{bartz1,bartz2}. The function introduced by Bartz was later used by Kaczorowski in \cite{kaczorowski} to study sums involving the M\"{o}bius function twisted by the cosine function. Let us set $\mathbb{H} = \{ x+iy, \; x \in \R, \; y >0 \}$.
\begin{definition}
Suppose that $z \in \mathbb{H}$, we define the $\varpi$ function by
\begin{align} \label{cohen_bartz_varpi}
\varpi _n^{(\beta )}(z) = \mathop {\lim }\limits_{m \to \infty } \sum_{\substack{\rho \\ 0 < \operatorname{Im} \rho  < {T_m}}} {\frac{{\sigma_{1 - \rho /\beta }^{(\beta )}(n)}}{{\zeta '(\rho )}}{e^{\rho z}}}. 
\end{align}
\end{definition}
The goal is to describe the analytic character of $\varpi _n^{(\beta )}(z)$. Specifically, we will construct its analytic continuation to a meromorphic function of $z$ on the whole complex plane and prove that it satisfies a functional equation. This functional equation takes into account values of $\varpi _n^{(\beta )}(z)$ at $z$ and at $\bar z$; therefore one may deduce the behavior of $\varpi _n^{(\beta )}(z)$ for $\imag(z)<0$. Finally, we will study the singularities and residues of $\varpi _n^{(\beta )}(z)$.
\begin{theorem} \label{bartz11}
The function $\varpi _n^{(\beta )}(z)$ is holomorphic on the upper half-plane $\mathbb{H}$ and for $z \in \mathbb{H}$ we have
\[2\pi i\varpi _n^{(\beta )}(z) = \varpi _{1,n}^{(\beta )}(z) + \varpi _{2,n}^{(\beta )}(z) - {e^{3z/2}}\sum_{q = 1}^\infty  {\frac{{c_q^{(\beta )}(n)}}{{{q^{3/2}}(z - \log q)}}} \]
where the last term on the right hand-side is a meromorphic function on the whole complex plane with poles at $z = \log q$ whenever $c_q^{(\beta)}(n)$ is not equal to zero. Moreover, 
\[
\varpi _{1,n}^{(\beta )}(z) = \int_{-1/2 + i \infty}^{-1/2} \frac{\sigma_{1-s/\beta}^{\beta}(n)}{\zeta(s)}e^{s z}ds
\]
is analytic on $\mathbb{H}$ and 
\[
\varpi _{2,n}^{(\beta )}(z) = \int_{-1/2}^{3/2} \frac{\sigma_{1-s/\beta}^{\beta}(n)}{\zeta(s)}e^{s z}ds
\] 
is regular on $\C$. 
\end{theorem}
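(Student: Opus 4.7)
The plan is to apply the residue theorem to the kernel $F(s)e^{sz}$, where $F(s)=\sigma_{1-s/\beta}^{(\beta)}(n)/\zeta(s)$, over a rectangular contour $\Gamma_{T_\nu}$ with vertices $-\tfrac12,\tfrac32,\tfrac32+iT_\nu,-\tfrac12+iT_\nu$, oriented counterclockwise, where $T_\nu$ is the sequence provided by (\ref{br-a}) and (\ref{br-b}). The only singularities of $F(s)e^{sz}$ inside $\Gamma_{T_\nu}$ are at the non-trivial zeros $\rho$ of $\zeta$ with $0<\imag\rho<T_\nu$ (the pole of $1/\zeta$ at $s=1$ is not of $F$ itself, since the numerator is finite there and the quotient extends analytically), with simple-zero residues $\sigma_{1-\rho/\beta}^{(\beta)}(n)e^{\rho z}/\zeta'(\rho)$, and the multiple-zero contributions handled parallel to ${\rm K}_{T_\nu}(x)$ in Theorem \ref{explicitcohenramanujan}. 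Summing the residues and passing to the limit $\nu\to\infty$ produces exactly $\varpi_n^{(\beta)}(z)$.

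The next step is to split $\oint_{\Gamma_{T_\nu}}$ into four edges and evaluate each in the limit. The bottom edge $[-\tfrac12,\tfrac32]$ gives directly $\varpi_{2,n}^{(\beta)}(z)$. On the right edge $\real s=\tfrac32$, I invoke Theorem \ref{line1theorem} (extended into $\real s>1$ where the Dirichlet series converges absolutely) to write $F(s)=\sum_q c_q^{(\beta)}(n)q^{-s}$; then interchanging sum and integral (justified by $\sum|c_q^{(\beta)}(n)|/q^{3/2}<\infty$) and computing
\[
\int_{3/2}^{3/2+iT}e^{s(z-\log q)}\,ds = \frac{e^{(3/2+iT)(z-\log q)}-e^{3(z-\log q)/2}}{z-\log q},
\]
the bound $|e^{iT(z-\log q)}|=e^{-T\imag z}\to 0$ (valid because $\imag z>0$) together with dominated convergence yields the limit $-e^{3z/2}\sum_q c_q^{(\beta)}(n)/(q^{3/2}(z-\log q))$. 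The top edge is controlled by the estimates of (\ref{br-a})-(\ref{br-b}), giving $|1/\zeta(\sigma+iT_\nu)|\ll T_\nu^{\varepsilon}$ uniformly for $\sigma\in[-\tfrac12,\tfrac32]$; combined with the factor $|e^{sz}|=e^{\sigma\real z}e^{-T_\nu\imag z}$ this makes the top-edge integral decay exponentially in $T_\nu$. The left edge, oriented downward, becomes $\varpi_{1,n}^{(\beta)}(z)$ in the limit by definition. Equating the sum of these four limits with $2\pi i\varpi_n^{(\beta)}(z)$ produces the identity stated.

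Finally, to conclude holomorphy on $\mathbb{H}$ and the side-statements about $\varpi_{1,n}^{(\beta)}$ and $\varpi_{2,n}^{(\beta)}$, I check each piece separately. The function $\varpi_{2,n}^{(\beta)}(z)$ is entire because the integrand is continuous on the compact interval $[-\tfrac12,\tfrac32]$ (with a removable singularity at $s=1$) and $e^{sz}$ is entire in $z$; differentiation under the integral sign is immediate. For $\varpi_{1,n}^{(\beta)}(z)$, on $\real s=-\tfrac12$ the functional equation gives $|F(-\tfrac12+it)|$ polynomially bounded, while $|e^{sz}|=e^{-\real z/2}e^{-t\imag z}$ decays exponentially for $z\in\mathbb{H}$, yielding locally uniform absolute convergence of the integral and hence analyticity on $\mathbb{H}$. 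The tail sum $e^{3z/2}\sum_q c_q^{(\beta)}(n)/(q^{3/2}(z-\log q))$ is a locally uniformly convergent series of simple rational functions on $\C\setminus\{\log q\}$ (again using $\sum|c_q^{(\beta)}(n)|/q^{3/2}<\infty$), hence meromorphic on $\C$ with simple poles on the real axis, so holomorphic on $\mathbb{H}$. Combining the three regularity statements through the identity shows $\varpi_n^{(\beta)}(z)$ is holomorphic on $\mathbb{H}$ and continues meromorphically to $\C$.

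The principal obstacle is the uniform control of $1/\zeta(\sigma+iT_\nu)$ required to force the top-edge integral to vanish; the point of selecting the subsequence $T_\nu$ satisfying (\ref{br-a})-(\ref{br-b}) is precisely to sidestep the pathological behaviour of $1/\zeta$ close to the zeros on horizontal lines. A secondary technical point is the rigorous bookkeeping of higher-multiplicity zeros, which mirrors the $\kappa(\rho_m,x)$ construction in Theorem \ref{explicitcohenramanujan} and does not alter the shape of the identity (under simplicity of zeros the ${\rm K}_{T_\nu}$ terms are simply absent).
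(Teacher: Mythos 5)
Your proposal is correct and follows essentially the same route as the paper: rectangular contour with left edge on $\real s = -\tfrac12$ and right edge on $\real s = \tfrac32$, residue theorem, expansion of the right edge via the Dirichlet series $\sum_q c_q^{(\beta)}(n)q^{-s}$ (which, note, is Lemma \ref{lemma23} rather than Theorem \ref{line1theorem}; the latter is the boundary case $\real s=1$), decay of the top edge from a polynomial bound on $1/\zeta$ beaten by $e^{-T\imag z}$, and identification of the left and bottom edges with $\varpi_{1,n}^{(\beta)}$ and $\varpi_{2,n}^{(\beta)}$.

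One technical mismatch worth flagging. The definition \eqref{cohen_bartz_varpi} brackets the sum over zeros with the Valiron sequence $T_m$ ($m\le T_m\le m+1$) of \eqref{br-c}, and that is the sequence the paper's proof carries through, using the bound $|1/\zeta(\sigma+iT_m)|\ll T_m^{c_1}$. You instead run the contour through the much sparser Montgomery heights $T_\nu$ of \eqref{br-a}--\eqref{br-b}. Your argument is sound as far as it goes, because $T_\nu^{\varepsilon}e^{-T_\nu y}\to 0$ just as $T_m^{c_1}e^{-T_m y}\to 0$, and the three non-top edges have the same limits either way; but strictly speaking you then establish convergence of the partial sums along a subsequence not obviously compatible with the $T_m$ in the definition of $\varpi_n^{(\beta)}$. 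The fix is trivial — run the identical estimate with $T_m$ in place of $T_\nu$, since only a polynomial bound on $1/\zeta$ on the top edge is needed — but the proof should either do that or observe explicitly that both bracketed limits equal the same contour-integral value.

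A harmless slip in wording: you refer to "the pole of $1/\zeta$ at $s=1$." Since $\zeta$ has a simple pole at $s=1$, the function $1/\zeta$ has a simple \emph{zero} there, and $F(s)=\sigma_{1-s/\beta}^{(\beta)}(n)/\zeta(s)$ is analytic (indeed vanishes) at $s=1$; your conclusion that $s=1$ contributes no residue is of course correct. The regularity discussion of $\varpi_{1,n}^{(\beta)}$, $\varpi_{2,n}^{(\beta)}$ and the tail series is accurate and matches the paper's estimates.
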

\begin{remark}
This is done on the assumption that the non-trivial zeros are all simple. This is done for the sake of clarity, since straightforwad modifications are needed to relax this assumption. See $\mathsection$8 for further details.
\end{remark}
\begin{theorem} \label{bartz12}
The function $\varpi _n^{(\beta )}(z)$ can be continued analytically to a meromorphic function on $\C$ which satisfies the functional equation
\begin{align} \label{FE_cohen_bartz}
\varpi _n^{(\beta )}(z) + \overline{\varpi _n^{(\beta )}(\bar z)} =  A^{(\beta)}_n(z) = - \sum_{q = 1}^\infty  {\frac{{\mu (q)}}{q}\sum_{k = 0}^\infty  {\frac{1}{{k!}}\bigg\{ {{{\left( {{e^{ - z}}\frac{{2\pi i}}{q}} \right)}^k} + {{\left( { - {e^{ - z}}\frac{{2\pi i}}{q}} \right)}^k}} \bigg\}\sigma_{1 + k/\beta }^{(\beta )}(n)} },
\end{align}
where the function $A^{(\beta)}_n(z)$ is entire and satisfies
\[
A^{(\beta)}_n(z)=2\sum_{k = 1}^\infty  {\frac{{{{( - 1)}^k}{{(2\pi )}^{2k}}}}{{(2k)!}}\frac{{{e^{ - 2kz}}\sigma_{1 + k/\beta }^{(\beta )}(n)}}{{\zeta (1 + 2k)}}}.
\]
\end{theorem}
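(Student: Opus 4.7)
The plan is to derive a companion identity for $\overline{\varpi_n^{(\beta)}(\bar z)}$ parallel to Theorem~\ref{bartz11}, add it to that identity so the bulk of the terms cancel, and evaluate the surviving piece via the functional equation of $\zeta$.

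For $z$ with $\imag z<0$ I would mirror the proof of Theorem~\ref{bartz11} in the lower half-plane by applying the residue theorem to the rectangle with vertices $-1/2,\ 3/2,\ 3/2-iT_\nu,\ -1/2-iT_\nu$ traversed counterclockwise. Conjugation pairs the enclosed zeros with those in the upper half-plane, so the sum of residues is $\overline{\varpi_n^{(\beta)}(\bar z)}$; the horizontal sides disappear as $T_\nu\to\infty$, the right side along $\real s=3/2$ collapses to $+e^{3z/2}\sum_q c_q^{(\beta)}(n)/(q^{3/2}(z-\log q))$ by the same Dirichlet-series calculation as in Theorem~\ref{bartz11} (with opposite sign due to the opposite orientation), the top side on $[-1/2,3/2]$ produces $-\varpi_{2,n}^{(\beta)}(z)$, and the left side along $\real s=-1/2$ yields $-\overline{\varpi_{1,n}^{(\beta)}(\bar z)}$. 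Adding this companion identity to the formula of Theorem~\ref{bartz11}, the $\varpi_{2,n}^{(\beta)}$ contribution and the series over $q$ cancel exactly, leaving
\[
2\pi i\bigl(\varpi_n^{(\beta)}(z)+\overline{\varpi_n^{(\beta)}(\bar z)}\bigr)=\varpi_{1,n}^{(\beta)}(z)-\overline{\varpi_{1,n}^{(\beta)}(\bar z)}.
\]
Since $\varpi_{2,n}^{(\beta)}$ is entire on $\mathbb{C}$ and the series piece is meromorphic on $\mathbb{C}$ with poles at $z=\log q$, the meromorphic continuation of $\varpi_n^{(\beta)}$ to all of $\mathbb{C}$ is reduced to controlling the right-hand side.

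To evaluate that right-hand side I would concatenate the two half-line integrals into the integral of $\sigma_{1-s/\beta}^{(\beta)}(n)e^{sz}/\zeta(s)$ over the vertical line $\real s=-1/2$ and then perform the substitution $s\mapsto 1-s$, moving the contour to $\real s=3/2$. On this new line the functional equation furnishes $1/\zeta(1-s)=(2\pi)^s/(2\cos(\pi s/2)\Gamma(s)\zeta(s))$ and the Dirichlet expansion $1/\zeta(s)=\sum_q\mu(q)q^{-s}$ is absolutely convergent. Interchanging the $q$-sum with the integral, each resulting contour integral has simple poles only at the odd positive integers $s=2k+1$, $k\ge 0$ (from the zeros of $\cos(\pi s/2)$); shifting the contour to the right, the tail vanishes in the limit thanks to the super-exponential decay of $1/\Gamma(s)$, and the residues assemble into a series in $e^{-2kz}$. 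Collapsing $\sum_q\mu(q)/q^{2k+1}=1/\zeta(2k+1)$ and noting that the $k=0$ contribution drops because $1/\zeta(1)=0$ (equivalently Corollary~\ref{corollaryPNT1}) yields the second displayed form of $A_n^{(\beta)}(z)$; recognising that each residue contributes the combination $(2\pi i/q)^k+(-2\pi i/q)^k$ (which vanishes for odd $k$) recovers the first, compact form.

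The principal obstacle is the rigorous justification of the contour manipulations: none of the vertical-line integrals in play are absolutely convergent on the real axis in $z$, and both the interchange of the $\mu$-sum with the integral and the shift of contour to $\real s=\infty$ require careful control via the $|t|^{-1}$ decay of $\cos(\pi s/2)\Gamma(s)$ on vertical lines and the super-exponential decay of $1/\Gamma(s)$ as $\real s\to\infty$. This can be handled either by working initially on a narrow strip in $z$ where all manipulations are directly legitimate and then propagating the identity to $\mathbb{C}$ by analytic continuation, or by inserting an analytic regulator and removing it at the end. Once the analysis is carried out, the $1/(2k)!$ damping of the residue series guarantees absolute convergence for every $z\in\mathbb{C}$, so $A_n^{(\beta)}(z)$ is entire as stated.
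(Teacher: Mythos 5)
Your overall strategy — derive a lower-half-plane companion to Theorem~\ref{bartz11}, add it to the upper-half-plane identity so that the $\varpi_{2,n}^{(\beta)}$ contribution and the $q$-series cancel, and then evaluate the surviving vertical pieces via the functional equation of $\zeta$ — is the same as the paper's, and your sign bookkeeping for the lower contour is correct: the companion reads $2\pi i\,\overline{\varpi_n^{(\beta)}(\bar z)} = -\overline{\varpi_{1,n}^{(\beta)}(\bar z)} - \varpi_{2,n}^{(\beta)}(z) + e^{3z/2}\sum_q c_q^{(\beta)}(n)/(q^{3/2}(z-\log q))$, and after adding one is left with $\varpi_{1,n}^{(\beta)}(z)-\overline{\varpi_{1,n}^{(\beta)}(\bar z)}$. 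Where you diverge from the paper is the evaluation of this quantity: you concatenate the two half-lines into a single integral over $\real s=-1/2$, substitute $s\mapsto 1-s$, expand $1/\zeta(s)$ in its Dirichlet series, and shift right through the poles of $1/\cos(\pi s/2)$ at odd integers $\geq 3$. The paper instead applies the functional equation to $\varpi_{1,n}^{(\beta)}$ and $\hat\varpi_{1,n}^{(\beta)}$ \emph{before} combining, splitting each into pieces $\varpi_{11}$, $\varpi_{12}$ carrying the factors $e^{\mp i\pi s/2}$, extending the $\varpi_{11}$ half-line to a full line $I_1$, and shifting \emph{left} through the $\Gamma$-poles at nonpositive integers, which produces the compact $(2\pi i/q)^k+(-2\pi i/q)^k$ form directly. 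Both routes give the same entire function $A_n^{(\beta)}(z)$.

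The convergence gap you flag, however, is more serious than your two proposed remedies suggest. The concatenated integral $\int_{-1/2-i\infty}^{-1/2+i\infty}\sigma_{1-s/\beta}^{(\beta)}(n)\,e^{sz}/\zeta(s)\,ds$ converges for \emph{no} value of $z$: the $\imag s\to+\infty$ half requires $\imag z>0$ for decay, the other half requires $\imag z<0$, and since $|\zeta(-\tfrac12+it)|^{-1}\asymp(1+|t|)^{-1}$ the integrand decays only like $1/|t|$ when $\imag z=0$. Thus your first remedy — ``work initially on a narrow strip in $z$'' — cannot make this particular integral legitimate, because no such strip exists for the unsplit integrand. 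The reason the paper's order of operations works is precisely that applying the functional equation first introduces a factor $\Gamma(s)e^{\mp i\pi s/2}$ whose decay, combined with $e^{sz}$, makes the full-line integral $I_{1,n}^{(\beta)}(z)$ absolutely convergent on the strip $0<\imag z<\pi$; that strip is created by the split and does not exist before it. Your alternative of an analytic regulator is in principle workable but is not carried out, so as written the plan stops just short of the point where the paper does the genuine analysis. If you reorder your steps — apply the functional equation to each half-line separately (as in Theorem~\ref{bartz11}), only then pass to a full line, and then shift — the rest of your argument goes through.
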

\begin{theorem} \label{bartz13}
The only singularities of $\varpi _n^{(\beta )}(z)$ are simple poles at the points $z = \log q$ on the real axis, where $q$ is an integer such that $c_q^{(\beta )}(n) \ne 0$, with residue
\[\mathop {\operatorname{res} }\limits_{z = \log n} \varpi _n^{(\beta )}(z) =  - \frac{1}{{2\pi i}}c_q^{(\beta )}(n).\]
\end{theorem}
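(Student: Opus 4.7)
The plan is to combine the decomposition of Theorem~\ref{bartz11} with the functional equation of Theorem~\ref{bartz12}: the functional equation will give a meromorphic continuation through the lower half-plane with no new singularities there, while the $\mathbb{H}$-decomposition will isolate the poles on $\R$ and furnish their residues.

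First I would verify that $\varpi_n^{(\beta)}(z)$ is holomorphic on $\C\setminus\R$. Holomorphy on $\mathbb{H}$ is part of Theorem~\ref{bartz11}. For $z$ in the open lower half-plane, rearranging the functional equation~\eqref{FE_cohen_bartz} gives
\[
\varpi_n^{(\beta)}(z)=A_n^{(\beta)}(z)-\overline{\varpi_n^{(\beta)}(\bar z)},
\]
where $A_n^{(\beta)}$ is entire (Theorem~\ref{bartz12}) and $z\mapsto\overline{\varpi_n^{(\beta)}(\bar z)}$ is holomorphic because $\bar z\in\mathbb{H}$. Hence $\varpi_n^{(\beta)}$ is also holomorphic below $\R$.

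Next I would read off the poles and their residues from the $\mathbb{H}$-decomposition. In
\[
2\pi i\,\varpi_n^{(\beta)}(z)=\varpi_{1,n}^{(\beta)}(z)+\varpi_{2,n}^{(\beta)}(z)-e^{3z/2}\sum_{q=1}^\infty\frac{c_q^{(\beta)}(n)}{q^{3/2}(z-\log q)},
\]
the term $\varpi_{2,n}^{(\beta)}$ is entire and the last series is meromorphic on $\C$ with simple poles exactly at those $z=\log q$ for which $c_q^{(\beta)}(n)\neq 0$. Granting (see the last paragraph) that $\varpi_{1,n}^{(\beta)}$ extends regularly across each such point, the pole of $\varpi_n^{(\beta)}$ at $z_0=\log q_0$ is simple with residue
\[
\operatorname*{Res}_{z=\log q_0}\varpi_n^{(\beta)}(z)=-\frac{1}{2\pi i}\cdot e^{(3/2)\log q_0}\cdot\frac{c_{q_0}^{(\beta)}(n)}{q_0^{3/2}}=-\frac{c_{q_0}^{(\beta)}(n)}{2\pi i},
\]
which is the claimed value. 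At real points not of this form the series is regular and, by the meromorphic continuation provided by Theorem~\ref{bartz12}, so is $\varpi_n^{(\beta)}$.

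The main obstacle is justifying that $\varpi_{1,n}^{(\beta)}$ contributes no extra singularities on $\R$, since it is originally defined by a vertical integral convergent only on $\mathbb{H}$. I would reverse the viewpoint: Theorem~\ref{bartz12} already supplies a meromorphic continuation of $\varpi_n^{(\beta)}$ to $\C$, so the identity above forces $\varpi_{1,n}^{(\beta)}=2\pi i\,\varpi_n^{(\beta)}-\varpi_{2,n}^{(\beta)}+e^{3z/2}S$ to extend meromorphically, with possible poles only at the $\log q$. The task then reduces to showing that near $z_0=\log q_0$ the singular part of $\varpi_n^{(\beta)}$ is exactly matched by that of the series term, so that $\varpi_{1,n}^{(\beta)}$ is regular there; equivalently, one checks that the $\mathbb{H}$-boundary value and the lower-half-plane boundary value of $\varpi_n^{(\beta)}$ coincide off $\{\log q:c_q^{(\beta)}(n)\neq 0\}$, a matching built into the functional equation~\eqref{FE_cohen_bartz} but requiring a careful local analysis to carry out explicitly.
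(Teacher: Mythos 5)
Your overall structure is the right one: use the functional equation of Theorem~\ref{bartz12} to get holomorphy off the real axis, and read off the poles and residues from the last term in the $\mathbb{H}$-decomposition of Theorem~\ref{bartz11}. Your residue computation for the series term is correct: $e^{(3/2)\log q_0}\cdot c_{q_0}^{(\beta)}(n)/q_0^{3/2}=c_{q_0}^{(\beta)}(n)$, with the factor $-\tfrac{1}{2\pi i}$ coming from the decomposition. And passing from $\mathbb{H}$ to the open lower half-plane via $\varpi_n^{(\beta)}(z)=A_n^{(\beta)}(z)-\overline{\varpi_n^{(\beta)}(\bar z)}$ is sound.

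However, the step you flag as the main obstacle --- showing that $\varpi_{1,n}^{(\beta)}$ contributes no further singularities on $\R$ --- is left genuinely open, and your proposed workaround is circular. You write that Theorem~\ref{bartz12} already gives the meromorphic continuation of $\varpi_n^{(\beta)}$ and that the identity therefore forces $\varpi_{1,n}^{(\beta)}$ to be meromorphic ``with possible poles only at the $\log q$.'' But a priori, meromorphic continuation does not locate the poles of $\varpi_n^{(\beta)}$; identifying which real points are poles and which are removable singularities is precisely what Theorem~\ref{bartz13} asserts. You cannot deduce that $\varpi_{1,n}^{(\beta)}$ is pole-free at $\log q_0$ from the identity alone without independently knowing the singular behavior of either $\varpi_n^{(\beta)}$ or $\varpi_{1,n}^{(\beta)}$ there. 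The fact that closes this gap is established in the \emph{proof} of Theorem~\ref{bartz11} (and is strictly more than its statement, which asserts analyticity of $\varpi_{1,n}^{(\beta)}$ only on $\mathbb{H}$): the decomposition
\[
\varpi_{1,n}^{(\beta)}(z)=I_{1,n}^{(\beta)}(z)+I_{2,n}^{(\beta)}(z)+\varpi_{12,n}^{(\beta)}(z),
\]
where $I_{1,n}^{(\beta)}$ is entire (an absolutely convergent double series via the Mellin--Barnes computation~\eqref{general_series}), $I_{2,n}^{(\beta)}$ is regular for $\imag z<\pi$, and $\varpi_{12,n}^{(\beta)}$ is regular for $\imag z>-\pi$, shows that $\varpi_{1,n}^{(\beta)}$ extends holomorphically to the full strip $|\imag z|<\pi$. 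Once you cite that strip-regularity, the rest of your argument goes through and matches the paper's (very terse) intended proof. Without it, the argument is incomplete.
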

\section{Proof of Theorem \ref{explicitcohenramanujan}}
%
In order to obtain unconditional results we use an idea put forward by Bartz \cite{bartz2}. The key is to use the following result of Montgomery, see \cite{montextreme} and Theorem 9.4 of \cite{ivic}. 
\begin{lemma}
 For any given $\varepsilon > 0$ there exists a real $T_0 = T_0(\varepsilon)$ such that for $T \ge T_0$ the following holds: between $T$ and $2T$ there exists a value of $t$ for which
 \[
  |\zeta(\sigma \pm it)|^{-1} < c_1 t^{\varepsilon} \textrm{ for } -1 \le \sigma \le 2,
 \]
with an absolute constant $c_1 > 0$.
\end{lemma}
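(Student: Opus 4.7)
The plan is to locate a suitable $t$ by showing that the set of ``bad'' $t\in[T,2T]$---those for which $|\zeta(\sigma\pm it)|^{-1}\ge c_1 t^{\varepsilon}$ for some $\sigma\in[-1,2]$---has measure strictly less than $T$. First I would reduce to the range $\sigma\in[\tfrac12,2]$ via the functional equation $\zeta(s)=\chi(s)\zeta(1-s)$: since $|\chi(\sigma\pm it)|^{-1}\asymp t^{1/2-\sigma}\le t^{3/2}$ for $\sigma\ge -1$, any polynomial factor in $t$ is absorbed by shrinking $\varepsilon$ at the end. In the easy subrange $\sigma\in[1,2]$ the Vinogradov--Korobov zero-free region already yields $|\zeta(\sigma\pm it)|^{-1}\ll (\log t)^{2/3}(\log\log t)^{1/3}$ uniformly in $\sigma$, which is trivially $\ll t^{\varepsilon}$ regardless of $t$, so no pigeonhole is required there.

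The heart of the argument is the critical-strip range $\sigma\in[\tfrac12,1]$. Here I would invoke the unconditional moment estimate
\[
\int_T^{2T}\bigl|\zeta(\sigma+it)\bigr|^{-2k}\,dt \;\ll_k\; T^{1+o(1)}
\]
for each fixed $k\ge 1$ and $\sigma\ge \tfrac12+\delta$, which follows from the Ingham--Selberg zero-density estimates combined with the standard mean-square bound for $1/\zeta$ (see Chapter~9 of \cite{ivic}). Markov's inequality then shows that, for a fixed $\sigma$, the set $\mathcal{B}_\sigma\subset[T,2T]$ where $|\zeta(\sigma\pm it)|^{-1}>t^{\varepsilon}$ has measure $\ll T^{1+o(1)-2k\varepsilon}$, which is $o(T)$ once $k>1/\varepsilon$. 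To upgrade ``for each $\sigma$'' to ``uniformly in $\sigma$'', I would take a grid $\tfrac12=\sigma_1<\dots<\sigma_M=2$ with spacing $1/\log T$, so $M\ll \log T$, and use a Borel--Carath\'eodory estimate on $\log\zeta$ in a disk of radius $\asymp 1$ centered at $2+it$ to interpolate: $\bigl|\log|\zeta(\sigma+it)|-\log|\zeta(\sigma_j+it)|\bigr|\ll 1$ whenever $|\sigma-\sigma_j|\le 1/\log T$. Hence $\bigcup_j \mathcal{B}_{\sigma_j}$ has total measure $\ll M\cdot T^{1-\varepsilon}=o(T)$, and any $t$ in the complement satisfies the bound for \emph{every} $\sigma\in[\tfrac12,2]$ simultaneously, and therefore, by the functional-equation reduction, for every $\sigma\in[-1,2]$.

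The main obstacle is ensuring uniformity in $\sigma$ with an \emph{absolute} constant $c_1$: every implicit constant---in the moment bound, in the interpolation between grid points, and in the functional-equation transfer---must be tracked to be genuinely independent of $\sigma$ throughout $[-1,2]$, and the moment estimate degrades as $\sigma\downarrow\tfrac12$, so the grid argument has to be carried out with a comfortable margin. This bookkeeping is precisely the content of Montgomery's paper \cite{montextreme} and of Theorem~9.4 of \cite{ivic}, which I would follow for the quantitative details rather than reproducing them here.
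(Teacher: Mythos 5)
The paper does not prove this lemma at all; it is quoted from Montgomery \cite{montextreme} and Theorem~9.4 of \cite{ivic} with no argument given, so there is nothing in-paper to compare your attempt against. Your exceptional-set strategy is the right general shape, but two of its load-bearing steps do not work, and they hide the entire difficulty of the theorem. The moment bound $\int_T^{2T}|\zeta(\sigma+it)|^{-2k}\,dt\ll T^{1+o(1)}$ is \emph{false} at $\sigma=\tfrac12$: there are zeros $\tfrac12+i\gamma$ with $\gamma\in[T,2T]$, near which $|\zeta(\tfrac12+it)|^{-2k}\asymp|t-\gamma|^{-2k}$, so the integral diverges for every $k\ge1$. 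Restricting to $\sigma\ge\tfrac12+\delta$ with $\delta$ fixed does not help, because the lemma requires $\sigma$ all the way down to (and past) $\tfrac12$, and the reflection $\sigma\mapsto1-\sigma$ maps a small right neighborhood of $\tfrac12$ to a small left neighborhood rather than away from it, so $\sigma\approx\tfrac12$ cannot be sidestepped via the functional equation; that neighborhood \emph{is} the content of the theorem. The Borel--Carath\'eodory interpolation $\bigl|\log|\zeta(\sigma+it)|-\log|\zeta(\sigma_j+it)|\bigr|\ll1$ for $|\sigma-\sigma_j|\le1/\log T$ is also unjustified: a zero of $\zeta$ lying between $\sigma_j+it$ and $\sigma+it$ sends $\log|\zeta|$ to $-\infty$ on that segment, and Borel--Carath\'eodory only controls $\log\zeta$ on disks where it is single-valued, i.e.\ zero-free. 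The lower bound on $\log|\zeta|$ that would make the interpolation legitimate is precisely what the lemma asserts, so this step is circular.

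Two smaller points. Your functional-equation reduction reaches the right conclusion for the wrong reason: $|\chi(\sigma+it)|\asymp t^{1/2-\sigma}$, hence $|\chi(\sigma+it)|^{-1}\asymp t^{\sigma-1/2}\le1$ for $\sigma\le\tfrac12$, so there is no polynomial loss at all; and a genuine factor $t^{3/2}$ could not be ``absorbed by shrinking $\varepsilon$,'' since $\varepsilon$ is prescribed in advance. Finally, the reference proof does not run through negative moments of $\zeta$; it starts from the decomposition $\log\zeta(\sigma+it)=\sum_{|\gamma-t|\le1}\log(\sigma+it-\rho)+O(\log t)$, valid uniformly for $-1\le\sigma\le2$, which gives $-\log|\zeta(\sigma+it)|\le\sum_{|\gamma-t|\le1}\log\frac{1}{|\gamma-t|}+O(\log t)$ with a nonnegative sum on the right, and then estimates $\int_T^{2T}\sum_{|\gamma-t|\le1}\log\frac{1}{|\gamma-t|}\,dt\ll T\log T$ (each zero contributes $O(1)$ to the $t$-integral, and there are $\ll T\log T$ zeros); a refinement of this averaging, not a raw moment of $|\zeta|^{-1}$, is what produces the $t^{\varepsilon}$ bound with threshold $T_0(\varepsilon)$. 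If you want a self-contained proof you should follow that route.
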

That is, for each $\varepsilon > 0$, there is a sequence $T_{\nu}$, where
\begin{equation} 
 2^{\nu-1} T_0(\varepsilon) \le T_{\nu} \le 2^{\nu} T_0(\varepsilon), \ \nu = 1,2,3,\ldots \label{br-a}
\end{equation}
such that
\begin{equation} 
 |{\zeta(\sigma \pm iT_{\nu})}|^{-1} < c_1 T_{\nu}^{\varepsilon} \textrm{ for } -1 \le \sigma \le 2. \label{br-b}
\end{equation}
Finally, towards the end we will need the following bracketing condition: $T_m$ ($m \le T_m \le m+1$) are chosen so that
\begin{equation} 
\left| {{{\zeta (\sigma  + i{T_m})}}} \right|^{-1} < T_m^{{c_1}} \label{br-c}
\end{equation}
for $-1 \le \sigma \le 2$ and $c_1$ is an absolute constant. The existence of such a sequence of $T_m$ is guaranteed by Theorem 9.7 of \cite{titchmarsh}, which itself is a result of Valiron, \cite{valiron}.\\
We will use either bracketing (\ref{br-a})-(\ref{br-b}) or \eqref{br-c} depending on the necessity. These choices will lead to different bracketings of the sum over the zeros in the various explicit formulas appearing in the theorems of this note. 

The first immediate result is as follows.
\begin{lemma} \label{lemma}
The generalized divisor function $\sigma_z^{(\beta)}(n)$ satisfies the following bound for $z \in \C$, $n \in \N$
\[
  |\sigma_z^{(\beta)}(n)| \le \sigma_{\real(z)}^{(\beta)}(n) \le n^{\beta \max(0,\real(z))+1}.                                                          
\]
\end{lemma}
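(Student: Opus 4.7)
The plan is to unpack the definition $\sigma_z^{(\beta)}(n) = \sum_{d^\beta \mid n} d^{\beta z}$ and estimate directly. The first inequality is an immediate application of the triangle inequality together with the identity $|d^{\beta z}| = d^{\beta \real(z)}$, valid because $d$ is a positive integer. This gives
\[
|\sigma_z^{(\beta)}(n)| \le \sum_{d^\beta \mid n} d^{\beta \real(z)} = \sigma_{\real(z)}^{(\beta)}(n),
\]
which handles the left-hand bound.

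For the second inequality I would split into cases according to the sign of $\real(z)$. If $\real(z) \ge 0$, then for each $d$ with $d^\beta \mid n$ one has $d^\beta \le n$, hence $d^{\beta \real(z)} = (d^\beta)^{\real(z)} \le n^{\real(z)}$. The number of admissible $d$ is at most $n^{1/\beta} \le n$ since $\beta \ge 1$, so $\sigma_{\real(z)}^{(\beta)}(n) \le n \cdot n^{\real(z)} = n^{1 + \real(z)} \le n^{1 + \beta \real(z)}$, using $\beta \ge 1$ and $\real(z) \ge 0$ for the last step. If instead $\real(z) < 0$, each summand satisfies $d^{\beta \real(z)} \le 1$, so crudely $\sigma_{\real(z)}^{(\beta)}(n) \le n^{1/\beta} \le n = n^{\beta \cdot 0 + 1}$. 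Since $\max(0,\real(z))$ is $\real(z)$ in the first case and $0$ in the second, both cases are captured uniformly by $n^{\beta \max(0,\real(z))+1}$.

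There is essentially no obstacle here; the only subtlety is being slightly careful in the case $\real(z) \ge 0$ to ensure the factor $\beta$ in the exponent is preserved when passing from $n^{1+\real(z)}$ to the stated bound, which is why one writes the final exponent as $\beta\max(0,\real(z))+1$ rather than the sharper $\max(0,\real(z))+1$. The argument is elementary and needs no appeal to multiplicativity of $\sigma_z^{(\beta)}$ or to any analytic input.
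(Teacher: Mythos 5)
Your argument is correct. The paper states Lemma~\ref{lemma} without proof (introducing it as ``the first immediate result''), so there is no authorial argument to compare against; your two-step verification supplies the omitted details. The first inequality is exactly the triangle inequality combined with $|d^{\beta z}| = d^{\beta\real(z)}$. For the second, your case split is sound: when $\real(z)\ge 0$ you bound each term by $(d^\beta)^{\real(z)}\le n^{\real(z)}$ and the count of admissible $d$ by $n^{1/\beta}\le n$, giving $n^{1+\real(z)}\le n^{1+\beta\real(z)}$ (the last step using $\beta\ge 1$ and $\real(z)\ge 0$); when $\real(z)<0$ every term is at most $1$ and the count bound gives $n^{1/\beta}\le n$. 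Both cases land on $n^{\beta\max(0,\real(z))+1}$. As you observe, your intermediate estimate $n^{1+\real(z)}$ is actually sharper than the stated bound in the $\real(z)\ge 0$ case; the extra factor of $\beta$ in the exponent of the lemma is simply slack, presumably retained because it is harmless for the applications later in the paper. No gap.
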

 In \cite{cohen} the following two properties of $c_q^{(\beta)}(n)$ are derived.
\begin{lemma} \label{lemma22}
For $\beta$ and $n$ integers one has
\begin{align} \label{cohenmoebius}
c_q^{(\beta )}(n) = \sum_{\substack{d|q \\ {d^\beta }|n}} {\mu \left( {\frac{q}{d}} \right){d^\beta }} \nonumber
\end{align}
where $\mu$ denotes the M\"{o}bius function.
\end{lemma}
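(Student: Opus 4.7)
The plan is to proceed by Möbius inversion on the coprimality condition in the definition of $c_q^{(\beta)}(n)$, followed by a geometric sum evaluation, in direct analogy with the classical derivation of $c_q(n) = \sum_{d \mid (q,n)} \mu(q/d)\,d$. The condition $(h,q^\beta)_\beta = 1$ means that no $d > 1$ with $d \mid q$ satisfies $d^\beta \mid h$, so by the standard Möbius identity the indicator of this condition decomposes as
\[
\mathbf{1}\bigl[(h,q^\beta)_\beta = 1\bigr] = \sum_{\substack{d \mid q \\ d^\beta \mid h}} \mu(d).
\]

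Next, I would insert this identity into the exponential sum defining $c_q^{(\beta)}(n)$ and interchange the order of summation:
\[
c_q^{(\beta)}(n) = \sum_{d \mid q} \mu(d) \sum_{\substack{0 \le h < q^\beta \\ d^\beta \mid h}} e^{2\pi i n h/q^\beta}.
\]
Writing $h = d^\beta k$ with $0 \le k < (q/d)^\beta$, the inner sum becomes a geometric sum $\sum_{k=0}^{(q/d)^\beta - 1} e^{2\pi i n k /(q/d)^\beta}$, which equals $(q/d)^\beta$ when $(q/d)^\beta \mid n$ and vanishes otherwise. After changing variables $d \mapsto q/d$ to reindex over divisors satisfying $d^\beta \mid n$, one recovers
\[
c_q^{(\beta)}(n) = \sum_{\substack{d \mid q \\ d^\beta \mid n}} \mu(q/d)\, d^\beta,
\]
as claimed.

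There is no serious obstacle here; the only step requiring a brief check is the validity of the Möbius-inversion step, which comes down to verifying that the greatest common $\beta$-th power divisor of $h$ and $q^\beta$ is $e^\beta$ where $e$ is the largest divisor of $q$ with $e^\beta \mid h$. Once this is noted, the computation is mechanical and the rest of the identity follows from the geometric sum evaluation.
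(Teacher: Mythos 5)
Your argument is correct. Note, however, that the paper does not prove this lemma at all: it is stated with an attribution to Cohen's 1949 Duke paper and used as a black box, so there is no in-paper proof to compare against. Your derivation --- replacing the indicator $\mathbf{1}\bigl[(h,q^\beta)_\beta = 1\bigr]$ by $\sum_{d\mid q,\, d^\beta\mid h}\mu(d)$, interchanging sums, evaluating the resulting geometric (orthogonality) sum over $h = d^\beta k$, and reindexing $d\mapsto q/d$ --- is the natural generalization of the classical proof that $c_q(n)=\sum_{d\mid(q,n)}\mu(q/d)\,d$, and is presumably essentially Cohen's original argument. The one place that deserves the care you flag is the M\"obius-inversion step: one should check that the common $\beta$-th power divisors of $h$ and $q^\beta$ are exactly $\{e^\beta : e\mid q,\ e^\beta\mid h\}$ (using $d^\beta\mid q^\beta \Leftrightarrow d\mid q$), that this set of $e$ is closed under lcm and downward-closed in the divisor lattice, hence equal to the set of divisors of its maximum $e_0$, and that therefore $\sum_{d\mid q,\, d^\beta\mid h}\mu(d)=\sum_{d\mid e_0}\mu(d)=\mathbf{1}[e_0=1]$. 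Once that is in place the rest is mechanical, as you say, and the proof is complete.
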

\begin{lemma} \label{lemma23}
For $\real(s)>1$ and $\beta \in \N$ one has
\[\sum_{q = 1}^\infty  {\frac{{c_q^{(\beta )}(n)}}{{{q^{\beta s}}}}}  = \frac{{\sigma _{1 - s }^{(\beta )}(n)}}{{\zeta (\beta s)}}.\]
\end{lemma}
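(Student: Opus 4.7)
The plan is to derive the identity by substituting the Möbius-type representation of $c_q^{(\beta)}(n)$ from Lemma \ref{lemma22} into the Dirichlet series and then collapsing the result into the quotient of an arithmetic sum and $\zeta(\beta s)$.

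First I would fix $s$ with $\real(s) > 1$ and justify absolute convergence of the series on the left. From Lemma \ref{lemma22} we get the trivial bound
\[
|c_q^{(\beta)}(n)| \;\le\; \sum_{\substack{d \mid q \\ d^{\beta} \mid n}} d^{\beta} \;\le\; \sigma_1^{(\beta)}(n),
\]
which is a constant depending only on $n$ and $\beta$. Hence $\sum_{q \ge 1} c_q^{(\beta)}(n) q^{-\beta s}$ converges absolutely whenever $\real(\beta s) > 1$, which is satisfied under the assumption $\real(s) > 1$ (since $\beta \ge 1$). This absolute convergence is what licenses the interchange of summations in the next step.

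Next I would substitute $c_q^{(\beta)}(n) = \sum_{d \mid q,\ d^\beta \mid n} \mu(q/d)\, d^{\beta}$, swap the summations, and write $q = dk$ with $k$ ranging over $\N$. The double sum factors as
\[
\sum_{q=1}^\infty \frac{c_q^{(\beta)}(n)}{q^{\beta s}}
\;=\; \sum_{\substack{d \\ d^{\beta} \mid n}} \frac{d^{\beta}}{d^{\beta s}} \sum_{k=1}^\infty \frac{\mu(k)}{k^{\beta s}}
\;=\; \frac{1}{\zeta(\beta s)} \sum_{d^{\beta} \mid n} d^{\beta(1-s)},
\]
using the standard Dirichlet series $1/\zeta(\beta s) = \sum_k \mu(k)/k^{\beta s}$, valid in $\real(\beta s) > 1$.

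Finally I would recognize the outer sum as $\sigma_{1-s}^{(\beta)}(n)$ directly from the definition $\sigma_z^{(\beta)}(n) = \sum_{d^\beta \mid n} d^{\beta z}$ with $z = 1-s$, obtaining the claimed identity. There is no real obstacle here: the only delicate point is the convergence/interchange justification, which is handled by the uniform-in-$q$ bound on $c_q^{(\beta)}(n)$ noted above, together with the well-known domain of convergence of $1/\zeta$.
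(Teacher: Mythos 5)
Your proof is correct, and it is the natural argument: substitute the Möbius-type expansion from Lemma~\ref{lemma22}, justify the interchange by the uniform bound $|c_q^{(\beta)}(n)|\le\sigma_1^{(\beta)}(n)$, factor out $\sum_k\mu(k)k^{-\beta s}=1/\zeta(\beta s)$, and recognize the remaining $d$-sum as $\sigma_{1-s}^{(\beta)}(n)$. The paper itself does not supply a proof of this lemma (it simply cites Cohen), so there is no alternative argument to compare against; your derivation fills that gap correctly.
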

From Lemma \ref{lemma22} one has the following bound
\[
|c_q^{(\beta )}(n)| \leqslant \sum_{\substack{d|q \\ {d^\beta }|n}} {{d^\beta }}  \leqslant \sum_{{d^\beta }|n} {{d^\beta }}  = \sigma_1^{(\beta )}(n).
\]
Suppose $x$ is a fixed non-integer. Let us now consider the positively oriented path $\mathcal{C}$ made up of the line segments $[c-iT,c+iT,-2N-1+iT,-2N-1-iT]$ where $T$ is not the ordinate of a non-trivial zero. We set $a_q = c_q^{(\beta)}(n)$ and we use the lemma in $\mathsection$3.12 of \cite{titchmarsh} to see that we can take $\psi(q) = \sigma_1^{(\beta)}(n)$. We note that for $\sigma > 1$ we have
\[
\sum_{q = 1}^\infty  {\frac{{|c_q^{(\beta )}(n)|}}{{{q^\sigma }}}}  \leqslant \sigma_1^{(\beta)} (n)\sum_{q = 1}^\infty  {\frac{1}{{{q^\sigma }}}}  = \sigma_1^{(\beta)} (n)\zeta (\sigma ) \ll {\frac{1}{{\sigma  - 1}}} 
\]
so that $\alpha=1$. Moreover, if in that lemma we put $s=0$, $c=1+1/\log x$ and $w$ replaced by $s$, then we obtain
\[
\mathfrak{C}_0^{(\beta )}(n,x) = \frac{1}{{2\pi i}}\int_{c - iT}^{c + iT} {\frac{{\sigma_{1 - s/\beta }^{(\beta )}(n)}}{{\zeta (s)}}\frac{{{x^s}}}{s}ds} + E_{1,T}(x),
\]
where $E_{1,T}(x)$ is an error term that will be evaluated later.
If $x$ is an integer, then $\tfrac{1}{2}c_x^{(\beta)}(n)$ is to be substracted from the left-hand side. Then, by residue calculus we have
\[
\frac{1}{{2\pi i}}\oint\nolimits_\mathcal{C} {\frac{{\sigma_{1 - s/\beta }^{(\beta )}(n)}}{{\zeta (s)}}\frac{{{x^s}}}{s}ds}  = {R_0} + {R_\rho }(T) + {\rm K}(x,T) + {R_{ - 2k}}(N),
\]
where each term is given by the residues inside $\mathcal{C}$
\[
{R_0} = \mathop {\operatorname{res} }\limits_{s = 0} \frac{{\sigma_{1 - s/\beta }^{(\beta )}(n)}}{{\zeta (s)}}\frac{{{x^s}}}{s} =  - 2\sigma_1^{(\beta )}(n),
\]
and for $k = 1,2,3,\cdots$ we have
\[
{R_{ - 2k}}(N) = \sum_{k = 1}^N {\mathop {\operatorname{res} }\limits_{s =  - 2k} \frac{{\sigma_{1 - s/\beta }^{(\beta )}(n)}}{{\zeta (s)}}\frac{{{x^s}}}{s}}  = \sum_{k = 1}^N {\frac{{\sigma_{1 + 2k/\beta }^{(\beta )}(n)}}{{\zeta '( - 2k)}}\frac{{{x^{ - 2k}}}}{{ - 2k}}} = \sum_{k = 1}^N {\frac{{{{( - 1)}^{k - 1}}(2\pi /{x})^{2k}}}{{(2k)!k\zeta (2k + 1)}}\sigma_{1 + 2k/\beta }^{(\beta )}(n)}. 
\]
For the non-trivial zeros we must distinguish two cases. For the simple zeros $\rho$ we have
\[{R_\rho }(T) = \sum_{|\gamma | < T} {\mathop {\operatorname{res} }\limits_{s = \rho } \frac{{\sigma_{1 - s/\beta }^{(\beta )}(n)}}{{\zeta (s)}}\frac{{{x^s}}}{s}}  = \sum_{|\gamma | < T} {\frac{{\sigma_{1 - \rho /\beta }^{(\beta )}(n)}}{{\zeta '(\rho )}}\frac{{{x^\rho }}}{\rho }}, \]
and by the formula for the residues of order $m$ we see that ${\rm K}(x,T)$ is of the form indicated in the statement of the theorem. We now bound the vertical integral on the far left
\begin{align}
  \int_{ - 2N - 1 - iT}^{ - 2N - 1 + iT} {\frac{{\sigma_{1 - s/\beta }^{(\beta )}(n)}}{{\zeta (s)}}\frac{{{x^s}}}{s}ds} &= \int_{2N + 2 - iT}^{2N + 2 + iT} {\frac{{\sigma_{1 - (1 - s)/\beta }^{(\beta )}(n)}}{{\zeta (1 - s)}}\frac{{{x^{1 - s}}}}{{1 - s}}ds}  \nonumber \\
   &= \int_{2N + 2 - iT}^{2N + 2 + iT} {\frac{{{x^{1 - s}}}}{{1 - s}}\frac{{\sigma_{1 - (1 - s)/\beta }^{(\beta )}(n){2^{s - 1}}{\pi ^s}}}{{\cos (\tfrac{{\pi s}}{2})\Gamma (s)}}\frac{1}{{\zeta (s)}}ds}  \nonumber \\
   &\ll {\int_{ - T}^T {\frac{1}{T}{{\left( {\frac{{2\pi }}{x}} \right)}^{2N + 2}} {e^{(2N+3) \log(n) + 2N + 2 - (2N + \tfrac{3}{2})\log (2N + 2)}}dt} }, \nonumber
\end{align}
since by the use of Lemma \ref{lemma} we have $\sigma_{1 - (1 - s)/\beta}^{(\beta)}(n) \ll \sigma_{1 - (1 - 2 - 2N)/\beta }^{(\beta)}(n) \ll n^{2N + 3}$. 
This tends to zero as $N \to \infty$, for a fixed $T$ and a fixed $n$. Hence we are left with
\begin{align*}
\mathfrak{C}_0^{(\beta )}(n,x) & =  - 2\sigma_1^{(\beta )}(n) + \sum_{|\gamma | < T} {\frac{{\sigma_{1 - \rho /\beta }^{(\beta )}(n)}}{{\zeta '(\rho )}}\frac{{{x^\rho }}}{\rho }}  + {\rm K}(x,T) \\
&+ \sum_{k = 1}^\infty  {\frac{{{{( - 1)}^{k - 1}}(2\pi /{x})^{2k}}}{{(2k)!k\zeta (2k + 1)}}\sigma_{1 + 2k/\beta }^{(\beta )}(n)}  + \frac{1}{{2\pi i}}\bigg( {\int_{c - iT}^{ - \infty  - iT}   + \int_{ - \infty  + iT}^{c + iT}  } \bigg)\frac{{\sigma_{1 - s/\beta }^{(\beta )}(n)}}{{\zeta (s)}}\frac{{{x^s}}}{s}ds - E_{1,T}(x) \\
&  = - 2\sigma_1^{(\beta )}(n) + \sum_{|\gamma | < T} {\frac{{\sigma_{1 - \rho /\beta }^{(\beta )}(n)}}{{\zeta '(\rho )}}\frac{{{x^\rho }}}{\rho }}  + {\rm K}(x,T) + \sum_{k = 1}^\infty  {\frac{{{{( - 1)}^{k - 1}}(2\pi /{x})^{2k}}}{{(2k)!k\zeta (2k + 1)}}\sigma_{1 + 2k/\beta }^{(\beta )}(n)}  \\
&+ E_{2,T}(x) - E_{1,T}(x),
\end{align*}
where the last two terms are to be bounded. For the second integral, we split the range of integration in $(-\infty + iT, -1 +iT) \cup (-1+iT,c+iT)$ and we write
\begin{align}
  \int_{ - \infty  + iT}^{ - 1 + iT} {\frac{{\sigma_{1 - s/\beta }^{(\beta )}(n)}}{{\zeta (s)}}\frac{{{x^s}}}{s}ds} &= \int_{2 + iT}^{\infty  + iT} {\frac{{\sigma_{1 - (1 - s)/\beta }^{(\beta )}(n)}}{{\zeta (1 - s)}}\frac{{{x^{1 - s}}}}{{1 - s}}ds}  \nonumber \\
   &= \int_{2 + iT}^{\infty  + iT} {\frac{{{x^{1 - s}}}}{{1 - s}}\frac{{\sigma_{1 - (1 - s)/\beta }^{(\beta )}(n){2^{s - 1}}{\pi ^s}}}{{\cos (\tfrac{{\pi s}}{2})\Gamma (s)}}\frac{1}{{\zeta (s)}}ds}  \nonumber \\
   &\ll {\int_2^\infty  {\frac{1}{T}{{\left( {\frac{{2\pi }}{x}} \right)}^\sigma} {e^{((\beta + \sigma) \log n + \sigma  - (\sigma  - \tfrac{1}{2})\log \sigma }}d\sigma } }  \ll {\frac{1}{T x^2}} . \nonumber  
\end{align}
We can now choose for each $\varepsilon > 0$, $T = T_{\nu}$ satisfying (\ref{br-a}) and (\ref{br-b}) such that
\[
\frac{1}{{\zeta (s)}} \ll {t^\varepsilon }, \quad \frac{1}{2} \leqslant \sigma  \leqslant 2,\quad t = {T_\nu}.
\]
Thus the other part of the integral is
\[
\int_{ - 1 + i{T_{\nu}}}^{c + i{T_{\nu}}} {\frac{{\sigma_{1 - s/\beta }^{(\beta )}(n)}}{{\zeta (s)}}\frac{{{x^s}}}{s}ds}  \ll {\int_{ - 1}^{\min (\beta,c)} T_\nu ^{\varepsilon  - 1}{e^{(\beta - \sigma + 1) \log n}}{x^\sigma }d\sigma + \int_{\min (\beta,c)}^{c} T_\nu ^{\varepsilon  - 1}{x^\sigma }d\sigma } \ll x T_\nu ^{\varepsilon  - 1}.
\]
The integral over $(2-iT_{\nu},-\infty-iT_{\nu})$ is dealt with similarly. It remains to bound $E_{1,T_\nu}(x)$, i.e. the three error terms on the right-hand side of (3.12.1) in \cite{titchmarsh} . We have $\psi(q) = \sigma_1^{(\beta)}(n)$, $s = 0$, $c = 1 + \frac{1}{\log x}$ and $\alpha=1$. Inserting these yields
\begin{align*}
  E_{T_\nu}(x) & =  E_{2,T_\nu}(x) -  E_{1,T_\nu}(x)  \\
  &\ll x T_\nu ^{\varepsilon  - 1} +   \frac{x \log x}{T_\nu}  +  \frac{x \sigma_1^{\beta}(n) \log x}{T_\nu}  +  \frac{x \sigma_1^{\beta}(n)}{T_\nu} \ll  \frac{x \log x}{T_\nu^{1-\varepsilon}} . 
\end{align*}
If we assume that all non-trivial zeros are simple then term ${\rm K}_{T_{\nu}}(x)$ disappears. 
Theorem \ref{explicitcohenramanujan} can be illustrated by plotting the explicit formula as follows.

\begin{figure}[H]
	\centering
	\includegraphics[scale=0.893]{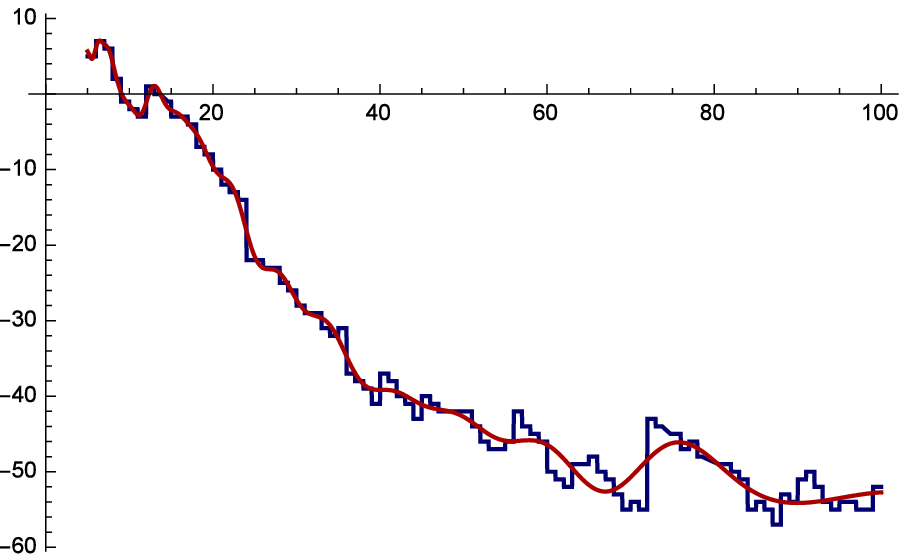}
	\includegraphics[scale=0.893]{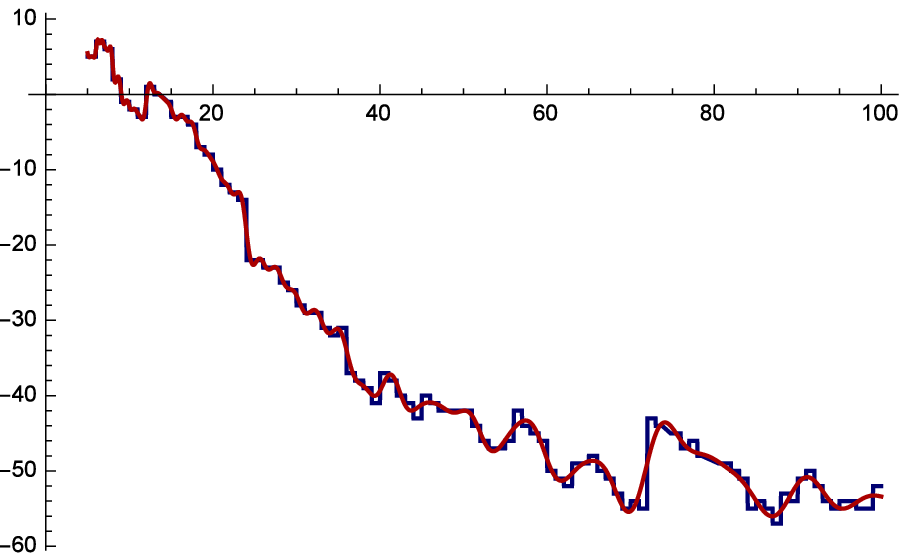}
	\caption{In blue: $\mathfrak{C}^{\sharp,(1)}(12,x)$, in red: the main terms of Theorem \ref{explicitcohenramanujan} with 5 and 25 pairs of zeros and $5 \le x \le 100$.}
\end{figure}

Increasing the value of $\beta$ does not affect the match. For $\beta=2$:

\begin{figure}[H]
	\centering
	\includegraphics[scale=0.893]{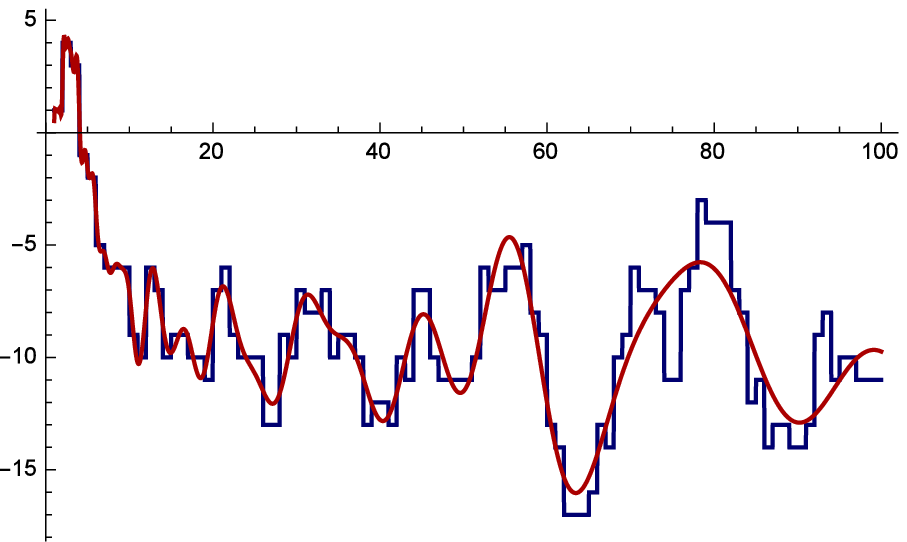}
	\includegraphics[scale=0.893]{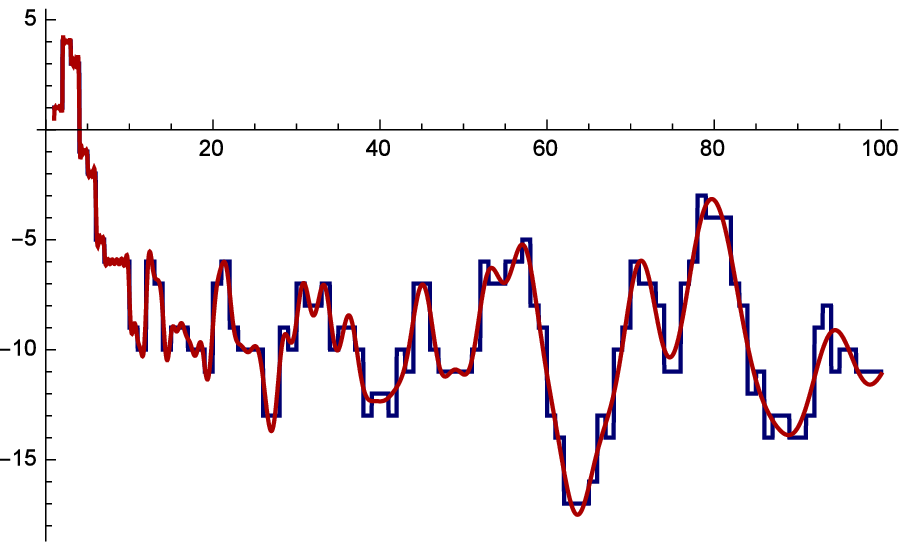}
	\caption{In blue: $\mathfrak{C}^{\sharp,(2)}(24,x)$, in red: the main terms of Theorem \ref{explicitcohenramanujan} with 5 and 25 pairs of zeros and $1 \le x \le 100$.}
\end{figure}

For $\beta =3$ we have the same effect

\begin{figure}[H]
	\centering
	\includegraphics[scale=0.893]{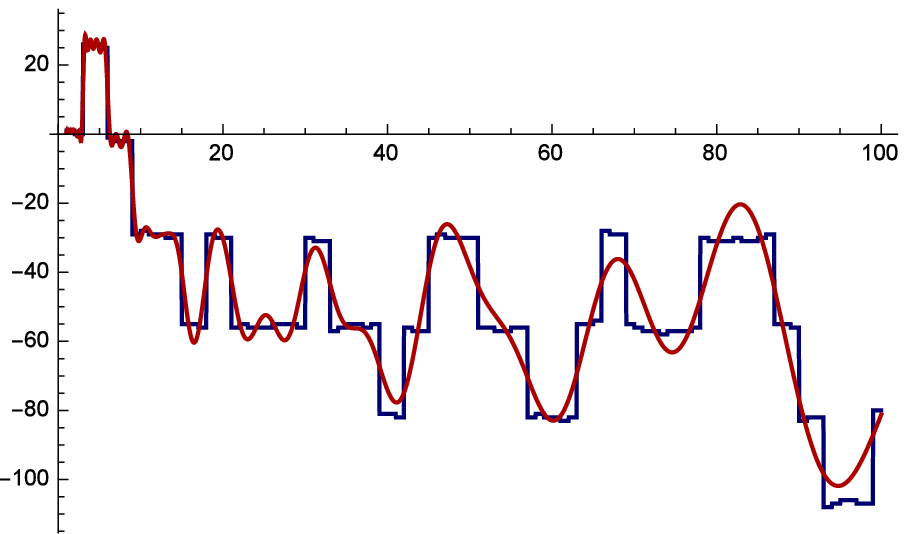}
	\includegraphics[scale=0.893]{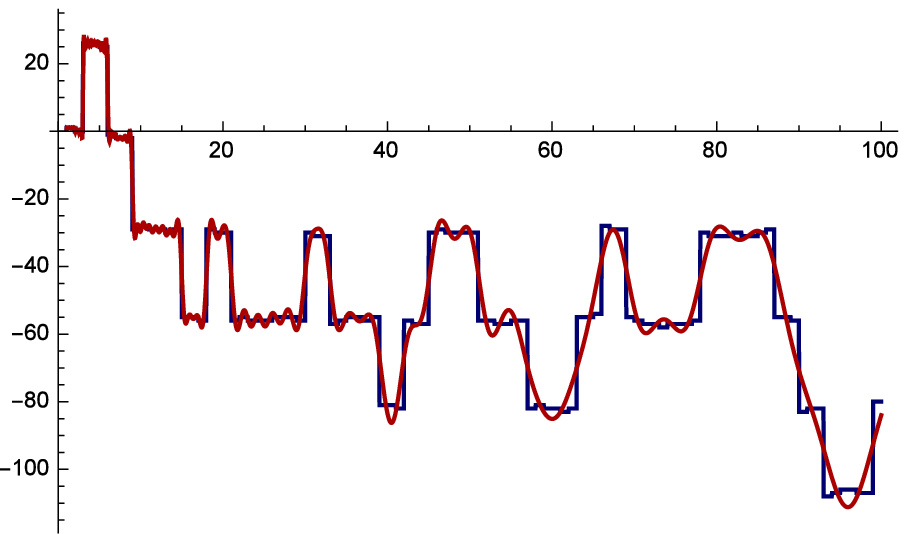}
	\caption{In blue: $\mathfrak{C}^{\sharp,(3)}(810,x)$, in red: the main terms of Theorem \ref{explicitcohenramanujan} with 5 and 25 pairs of zeros and $1 \le x \le 100$.}
\end{figure}
%
\section{Proof of Theorem \ref{line1theorem} and Corollary \ref{corollaryPNT1}}
We shall use the lemma in $\mathsection$3.12 of \cite{titchmarsh}. Take $a_q = c_q^{(\beta)}(n)$, $\alpha = 1$ and let $x$ be half an odd integer. Let $s = 1+it$, then
\begin{align*}
  \sum_{q < x} {\frac{{c_q^{(\beta )}(n)}}{{{q^s}}}} &= \frac{1}{{2\pi i}}\int_{c - iT}^{c + iT} {\frac{{\sigma_{1 - (s + w)/\beta }^{(\beta )}(n)}}{{\zeta (s + w)}}\frac{{{x^w}}}{w}dw}  + O\left( {\frac{{{x^c}}}{{Tc}}} \right) + O\left( {\frac{1}{T}\sigma_1^{(\beta )}(n)\log x} \right) \\
   &= \frac{{\sigma_{1 - s/\beta }^{(\beta )}(n)}}{{\zeta (s)}} + \frac{1}{{2\pi i}}\bigg( {\int_{c - iT}^{ - \delta  - iT} {}  + \int_{ - \delta  - iT}^{ - \delta  + iT} {}  + \int_{ - \delta  + iT}^{c + iT} {} } \bigg)\frac{{\sigma_{1 - (s + w)/\beta }^{(\beta )}(n)}}{{\zeta (s + w)}}\frac{{{x^w}}}{w}dw, 
\end{align*}
where $c>0$ and $\delta$ is small enough that $\zeta(s+w)$ has no zeros for
\[\operatorname{Re} (w) \geqslant  - \delta ,\quad |\operatorname{Im} (s + w)| = |t + \operatorname{Im} (w)| \leqslant |t| + T.\]
It is known from $\mathsection$3.6 of \cite{titchmarsh} that $\zeta(s)$ has no zeros in the region $\sigma > 1 - A \log^{-9}t$, where $A$ is a positive constant. 
Thus, we can take $\delta = A \log^{-9}T$. The contribution from the vertical integral is given by
\begin{align*}
  \int_{ - \delta  - iT}^{ - \delta  + iT} {\frac{{\sigma_{1 - (s + w)/\beta }^{(\beta )}(n)}}{{\zeta (s + w)}}\frac{{{x^w}}}{w}dw} &\ll {{x^{ - \delta }}n^{\delta} {{\log }^7}T \int_{ - T}^T {\frac{{{dv}}}{{\sqrt {{\delta ^2} + {v^2}} }}} } \ll {x^{ - \delta }}{n^{ \delta }}{\log ^8}T.   
\end{align*}
For the top horizontal integral we get
\begin{align*}
  \int_{ - \delta  + iT}^{c + iT} {\frac{{\sigma_{1 - (s + w)/\beta }^{(\beta )}(n)}}{{\zeta (s + w)}}\frac{{{x^w}}}{w}dw} &\ll  {\frac{{{{\log }^7}T}}{T} \bigg(\int_{ - \delta }^{\min(c,\beta - 1)} {{n^{ \beta - u }}{x^u}du} } + \int_{\min(c,\beta - 1)}^{c} x^u du \bigg)  \\
  &\ll {\frac{{{{\log }^7}T}}{T}{x^c}\bigg( \int_{ - \delta }^{\min(c,\beta - 1)} {{n^{ \beta - u}}du} } + c \bigg) \ll \frac{\log^7 T}{T} x^c n^{\delta}, 
\end{align*}
provided $x>1$. For the bottom horizontal integral we proceed the same way. Consequently, we have the following
\begin{align*}
  \sum_{q < x} {\frac{{c_q^{(\beta )}(n)}}{{{q^s}}}}  - \frac{{\sigma_{1 - s/\beta }^{(\beta )}(n)}}{{\zeta (s)}} \ll  {\frac{{{x^c}}}{{Tc}}} +  {\frac{1}{T}\sigma_1^{(\beta )}(n)\log x} + {x^{ - \delta }}{n^{ \delta }}{\log ^8}T +  {\frac{{{{\log }^7}T}}{T}{x^c}{n^{\delta }}} . 
\end{align*}
Now, we choose $c = 1/ \log x$ so that $x^c =e$. We take $T = \exp\{ (\log x)^{1/10}\}$ so that $\log T = (\log x)^{1/10}$, $\delta = A (\log x)^{-9/10}$ and $x^{\delta} = T^A$. Then it is seen that the right-hand side tends to zero as $x \to \infty$ and the result follows.

Corollary \ref{corollaryPNT1} follows by Lemma \ref{lemma23} for $\beta \ge 1$ and, by Theorem \eqref{line1theorem} with $\real(s) \ge 1$. If $s=1$ then the first equation follows. If in \eqref{line1cohen} we set $s=1$ then the second equation follows. Setting $s = \beta =1$ yields the third equation. Finally, putting $n=1$ in the third equation yields the fourth equation.

The plots of Corollary \ref{corollaryPNT1} are illustrated below.
\begin{figure}[H]
	\centering
	\includegraphics[scale=0.893]{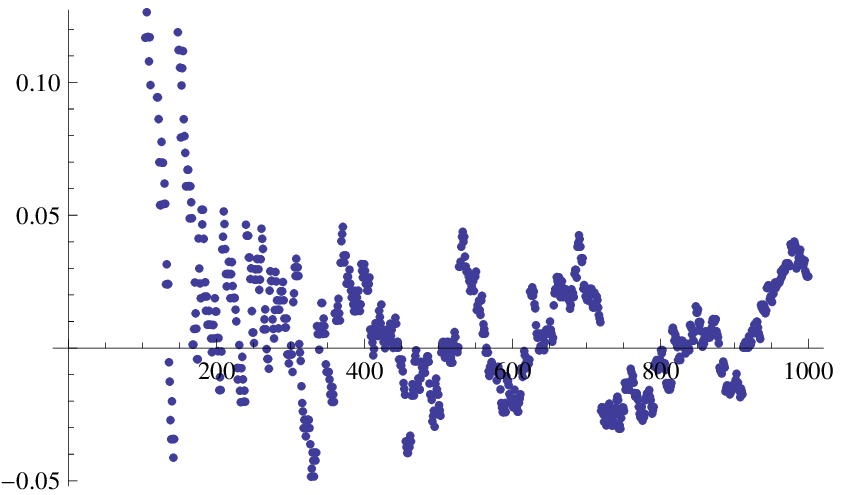}
	\includegraphics[scale=0.893]{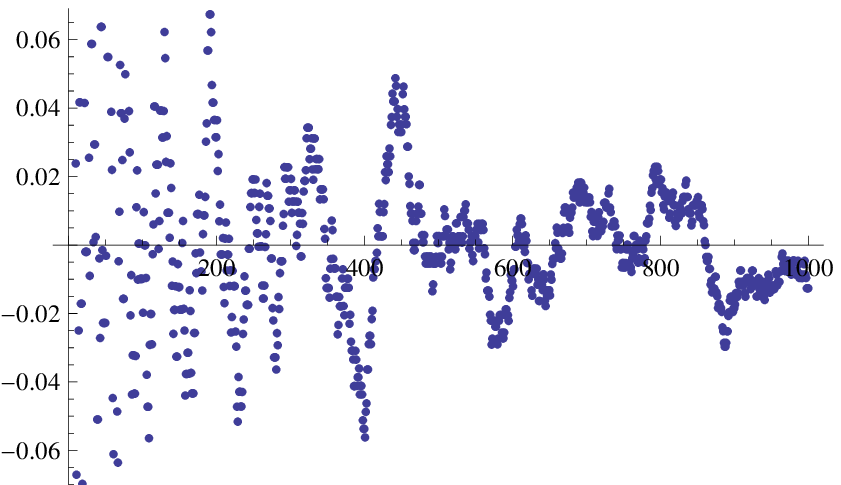}
	\caption{Plot of $\sum\nolimits_{q = 1}^x {c_q^{(1)}(24)/q}$ and of $\sum\nolimits_{q = 1}^x {c_q^{(2)}(24)/q}$ for $1 \le x \le 1000$.}
\end{figure}
\begin{figure}[H]
	\centering
	\includegraphics[scale=0.893]{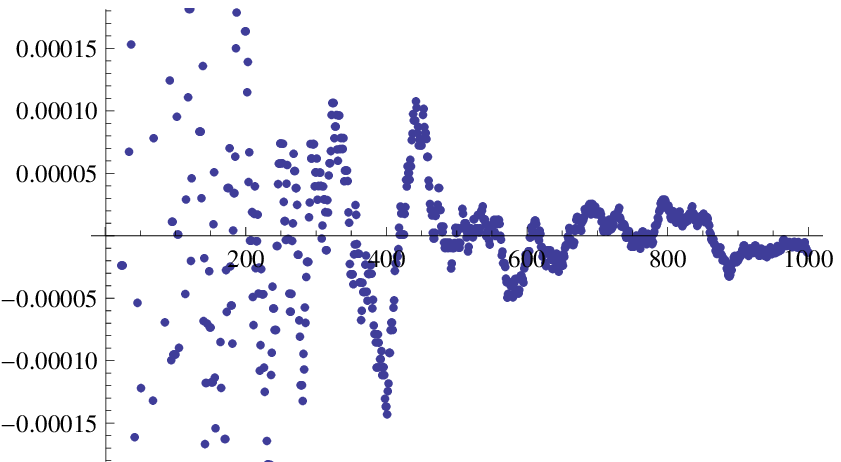}
	\includegraphics[scale=0.893]{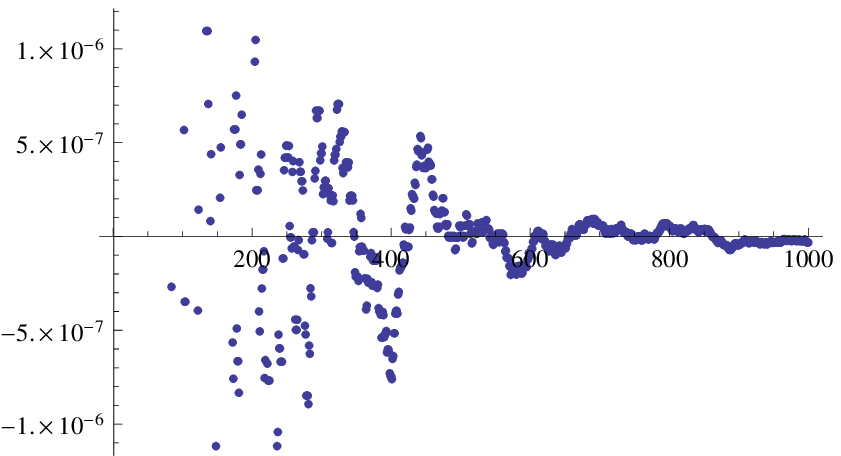}
	\caption{Plot of $\sum\nolimits_{q = 1}^x {c_q^{(2)}(24)/q^2} - \sigma_0^{(2)}(24)/\zeta (2)$ and of $\sum\nolimits_{q = 1}^x {c_q^{(3)}(24)/q^3} - \sigma_0^{(3)}(24)/\zeta (3)$ for $1 \le x \le 1000$.}
\end{figure}
\section{Proof of Theorem \ref{equivalence1}}
In the lemma of $\mathsection$3.12 of \cite{titchmarsh}, take $a_q = c_q^{(\beta)}(n)$, $f(s)=\sigma_{1-s/\beta}^{(\beta)}(n) / \zeta(s)$, $c=2$, $x$ half an odd integer. Then
\begin{align*}
  \sum_{q < x} {\frac{{c_q^{(\beta )}(n)}}{{{q^s}}}} &= \frac{1}{{2\pi i}}\int_{2 - iT}^{2 + iT} {\frac{{\sigma_{1 - (s + w)/\beta }^{(\beta )}(n)}}{{\zeta (s + w)}}\frac{{{x^w}}}{w}dw}  + O\left( {\frac{{{x^2}}}{T}} \right)  \\
   &= \frac{1}{{2\pi i}}\bigg( {\int_{2 - iT}^{\tfrac{1}{2} - \sigma  + \delta  - iT} {}  + \int_{\tfrac{1}{2} - \sigma  + \delta  - iT}^{\tfrac{1}{2} - \sigma  + \delta  + iT} {}  + \int_{\tfrac{1}{2} - \sigma  + \delta  + iT}^{2 + iT} {} } \bigg)\frac{{\sigma_{1 - (s + w)/\beta }^{(\beta )}(n)}}{{\zeta (s + w)}}\frac{{{x^w}}}{w}dw  \\
   &+ \frac{{\sigma_{1 - s/\beta }^{(\beta )}(n)}}{{\zeta (s)}} + O\left( {\frac{{{x^2}}}{T}} \right), 
\end{align*}
where $0 < \delta < \sigma - \tfrac{1}{2}$. If we assume RH, then $\zeta(s) \ll t^{\varepsilon}$ for $\sigma \ge \tfrac{1}{2}$ and $\forall \varepsilon >0$ so that the first and third integrals are 
\[
  \ll {{T^{ - 1 + \varepsilon }} \bigg( \int_{\tfrac{1}{2} - \sigma  + \delta }^{\min(\beta - \sigma,2)} {{n^{\beta - \sigma + v}}{x^v}dv} } + \int_{\min(\beta - \sigma,2)}^{2} x^v dv \bigg) \ll {T^{ - 1 + \varepsilon }}{x^2},
\]
provided $x > 1$. The second integral is
\[
 \ll {{x^{\tfrac{1}{2} - \sigma  + \delta }} n^{\beta + \delta + 1} \int_{ - T}^T {{{(1 + |t|)}^{ - 1 + \varepsilon }}dt} } \ll {{x^{\tfrac{1}{2} - \sigma  + \delta }}{T^\varepsilon }} . 
\]
Thus we have
\[
\sum_{q < x} {\frac{{c_q^{(\beta )}(n)}}{{{q^s}}}}  = \frac{{\sigma_{1 - s/\beta }^{(\beta )}(n)}}{{\zeta (s)}} + O({x^{\tfrac{1}{2} - \sigma  + \delta }}{T^\varepsilon }) + O({x^2}{T^{\varepsilon  - 1}}).
\]
Taking $T=x^3$ the $O$-terms tend to zero as $x \to \infty$, and the result follows. Conversely, if \eqref{RH_equivalent} is convergent for $\sigma > \tfrac{1}{2}$, then it is uniformly convergent for $\sigma \ge \sigma_ 0 > \tfrac{1}{2}$, and so in this region it represents an analytic function, which is $\sigma_{1-s/\beta}^{(\beta)}(n) / \zeta(s)$ for $\sigma > 1$ and so throughout the region. This means that the Riemann hypothesis is true and the proof is now complete.
\section{Proof of Theorem \ref{equivalence2}}
In the lemma of $\mathsection$3.12 of \cite{titchmarsh}, take $a_q = c_q^{(\beta)}(n)$, $f(w)=\sigma_{1-w/\beta}^{(\beta)}(n) / \zeta(w)$, $c=2$, $s = 0$, $\delta > 0$ and $x$ half an odd integer. Then
\begin{align*}
\mathfrak{C}^{(\beta)}(n,x) & = \sum_{q < x} c_q^{(\beta)}(n) = \frac{1}{2 \pi i} \int_{2 - iT}^{2 + iT} {\frac{{\sigma_{1 - w/\beta }^{(\beta )}(n)}}{{\zeta (w)}}\frac{{{x^w}}}{w}dw}  + O\bigg( {\frac{{{x^2 \sigma_1^{(\beta)}(n)}}}{T}} \bigg) \\
& = \frac{1}{{2\pi i}}\bigg( {\int_{2 - iT}^{\tfrac{1}{2} + \delta  - iT} {}  + \int_{\tfrac{1}{2}  + \delta  - iT}^{\tfrac{1}{2} + \delta  + iT} {}  + \int_{\tfrac{1}{2}  + \delta  + iT}^{2 + iT} {} } \bigg) {\frac{{\sigma_{1 - w/\beta }^{(\beta )}(n)}}{{\zeta (w)}}\frac{{{x^w}}}{w}dw}  + O\left( {\frac{{{x^2}}}{T}} \right) \\
&\ll  \int_{-T}^{T} x^{1/2 + \delta} \sigma_{1 - (\frac{1}{2} + \delta)/\beta}^{(\beta )}(n) (1 + |t|)^{\varepsilon - 1} dt +  T^{\varepsilon - 1} x^2 \sigma_{1 - (\frac{1}{2}+ \delta)/\beta}^{(\beta )}(n) +  {\frac{{{x^2 }}}{T}}  \\
&\ll T^{\varepsilon} x^{1/2 + \delta}  +  T^{\varepsilon - 1} x^2  + {\frac{{{x^2 }}}{T}} .
\end{align*}
If we take $T = x^2$, then 
\[
 \mathfrak{C}^{(\beta)}(n,x) \ll x^{1/2 + \varepsilon'} .
\]
for $\varepsilon' = 2 \varepsilon + \delta > 0$ and (\ref{RH-1}) follows. Conversely, if (\ref{RH-1}) holds, then by Abel summation 
\begin{align*}
 \sum_{q \le x} \frac{c_q^{(\beta)}(n)}{q^s} \ll \mathfrak{C}^{(\beta)}(n,x) x^{-\sigma} t +  \int_1^x \mathfrak{C}^{(\beta)}(n,t) t^{-\sigma - 1} dt \ll  x^{\frac{1}{2} - \sigma + \varepsilon}  
\end{align*}
converges as $x \to \infty$ for  $\sigma > \frac{1}{2}$, and thus the Riemann hypothesis follows.
\section{Proof of Theorem \ref{explicitCohenChebyshev}}
First, the Dirichlet series are given by the following result.
\begin{lemma} \label{dirichletcohenmangoldt}
For $\real(s)>1$ and $\beta, k \in \N$ one has
\[\sum_{n = 1}^\infty  {\frac{{ \Lambda _{k,m}^{(\beta )}(n)}}{{{n^s}}}}  = {( - 1)^k}\sigma_{1 - s/\beta }^{(\beta )}(m)\frac{{{\zeta ^{(k)}}(s)}}{{\zeta (s)}},\]
where $\zeta^{(k)}(s)$ is the $k^{\operatorname{th}}$ derivative of the Riemann zeta-function.
\end{lemma}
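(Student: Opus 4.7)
The plan is to recognize $\Lambda_{k,m}^{(\beta)}$ as a Dirichlet convolution and then multiply the corresponding generating series. By the definition
\[
\Lambda _{k,m}^{(\beta )}(n) = \sum_{d\delta  = n} c_d^{(\beta )}(m)\,\log^k \delta,
\]
the arithmetic function $\Lambda_{k,m}^{(\beta)}$ is the Dirichlet convolution of $d \mapsto c_d^{(\beta)}(m)$ with $\delta \mapsto \log^k \delta$. Hence, as formal Dirichlet series,
\[
\sum_{n=1}^{\infty} \frac{\Lambda_{k,m}^{(\beta)}(n)}{n^s} \;=\; \left( \sum_{d=1}^{\infty} \frac{c_d^{(\beta)}(m)}{d^s} \right) \left( \sum_{\delta=1}^{\infty} \frac{\log^k \delta}{\delta^s} \right),
\]
and the task reduces to identifying each factor on the right.

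For the first factor, I would apply Lemma \ref{lemma23} after the substitution $u = \beta s$ (equivalently, unfold Lemma \ref{lemma22} and use the standard expansion $\sum_d \mu(q/d) d^{\beta}/q^s = \zeta(s)^{-1}\sum_{d^{\beta}|n} d^{\beta - s}$, then recognize $\sum_{d^{\beta}|n} d^{\beta - s} = \sigma_{1-s/\beta}^{(\beta)}(n)$ by comparing exponents). Either route yields
\[
\sum_{d=1}^{\infty} \frac{c_d^{(\beta)}(m)}{d^s} \;=\; \frac{\sigma_{1-s/\beta}^{(\beta)}(m)}{\zeta(s)} \qquad (\real(s) > 1).
\]
For the second factor, termwise differentiation of the absolutely convergent series $\zeta(s) = \sum_{\delta=1}^\infty \delta^{-s}$ yields $\zeta^{(k)}(s) = (-1)^k \sum_{\delta=1}^\infty \log^k(\delta)\,\delta^{-s}$, so
\[
\sum_{\delta=1}^{\infty} \frac{\log^k \delta}{\delta^s} \;=\; (-1)^k \zeta^{(k)}(s).
\]
Multiplying these two identities gives the claimed formula
\[
\sum_{n=1}^{\infty} \frac{\Lambda_{k,m}^{(\beta)}(n)}{n^s} \;=\; (-1)^k \sigma_{1-s/\beta}^{(\beta)}(m)\,\frac{\zeta^{(k)}(s)}{\zeta(s)}.
\]

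The only point requiring care is the justification of the product formula, which needs absolute convergence of both Dirichlet series in the half-plane $\real(s) > 1$. This is immediate: the bound $|c_d^{(\beta)}(m)| \le \sigma_1^{(\beta)}(m)$ (derived from Lemma \ref{lemma22} and displayed before Section 2) shows that the $c$-series is dominated by $\sigma_1^{(\beta)}(m)\,\zeta(\sigma)$, while $\sum \log^k(\delta)/\delta^\sigma$ converges for $\sigma > 1$ by comparison with $\zeta(\sigma - \varepsilon)$. Thus there is no genuine obstacle; the lemma is a direct consequence of the convolution identity for Dirichlet series combined with the Dirichlet series already established for $c_d^{(\beta)}(m)$.
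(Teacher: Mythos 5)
Your argument is correct and is essentially the paper's proof: recognize $\Lambda_{k,m}^{(\beta)}$ as the Dirichlet convolution of $c_{\cdot}^{(\beta)}(m)$ with $\log^k$, identify the first factor via Lemma \ref{lemma23} (or directly from Lemma \ref{lemma22}) as $\sigma_{1-s/\beta}^{(\beta)}(m)/\zeta(s)$, and the second as $(-1)^k\zeta^{(k)}(s)$. The only difference is that you spell out the absolute-convergence justification for the product formula, which the paper leaves implicit.
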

\begin{proof}
By Lemma \ref{lemma23}, and 
\[
  \sum_{n=1}^{\infty} \frac{\log^k n}{n^s} = (-1)^k \zeta^{(k)}(s)
\]
for $\real(s) >1$, the result follows by Dirichlet convolution.
\end{proof}
From Lemma \ref{dirichletcohenmangoldt} we deduce that 
\[
 \Lambda _{1,m}^{(\beta )}(n) \ll n^{\varepsilon} 
\]
for each $\varepsilon > 0$, otherwise the sum would not be absolutely convergent for $\real(s) > 1$.
It is known, see for instance Lemma 12.2 of \cite{montvau}, that for each real number $T \ge 2$ there is a $T_1$, $T \le T_1 \le T+1$, such that
\[
\frac{{\zeta '}}{\zeta }(\sigma  + i{T_1}) \ll {(\log T)^2}
\]
uniformly for $-1 \le \sigma \le 2$. By using Perron's inversion formula with $\sigma_0 = 1 + 1/ \log x$ we obtain
\[
\psi _{0,m}^{(\beta )}(x) =  - \frac{1}{{2\pi i}}\int_{{\sigma _0} - i{T_1}}^{{\sigma _0} + i{T_1}} {\sigma_{1 - s/\beta }^{(\beta )}(m)\frac{{\zeta '}}{\zeta }(s)\frac{{{x^s}}}{s}ds}  + {R_1},
\]
where
\[
{R_1} \ll \sum_{\substack{x/2 < n < 2x \\ n \ne x}} {\Lambda_{1,m}^{(\beta )}(n)\min \left( {1,\frac{x}{{T|x - n|}}} \right) + \frac{x}{T}} \sum_{n = 1}^\infty  {\frac{{\Lambda_{1,m}^{(\beta )}(n)}}{{{n^{{\sigma _0}}}}}} . 
\]
The second sum is
\[ 
- \sigma_{1 - {\sigma _0}/\beta }^{(\beta )}(m)\frac{{\zeta '}}{\zeta }({\sigma _0}) \asymp \frac{{\sigma_{1 - {\sigma _0}/\beta }^{(\beta )}(m)}}{{{\sigma _0} - 1}} = \sigma_{1 - (1 + 1/\log x)/\beta }^{(\beta )}(m)\log x.
\]
The term involving the generalized divisor function can be bounded in the following way:
\[
 \sigma_{1 - (1 + 1/\log x)/\beta }^{(\beta )}(m) \le m^{\beta + 1/\log x}
\]
if $\frac{1}{\log x} \le \beta - 1$, and $\le m$ otherwise. In both cases, this is bounded in $x$. For the first sum we do as follows. The terms for which $x + 1 \le n < 2x$ contribute an amount which is
\[ 
\ll \sum_{x + 1 \leqslant n < 2x} {\frac{{ x^{1+\varepsilon}}}{{T(n - x)}}} \ll  \frac{x^{1+\varepsilon}\log x}{T} .
\]
The terms for which $x / 2 < n \le x-1$ are dealt with in a similar way. The remaining terms for which $x -1 < n < x+1$ contribute an amount which is
\[ 
\ll x^{\varepsilon} \min \bigg( {1,\frac{x}{{T\left\langle x \right\rangle_{\beta} }}} \bigg) ,
\]
therefore, the final bound for $R_1$ is
\[
{R_1} \ll x^{\varepsilon}  \min \bigg( {1,\frac{x}{{T\left\langle x \right\rangle_{\beta} }}} \bigg) + \frac{x^{1+\varepsilon}\log x}{T} .
\]
We denote by $N$ an odd positive integer and by $\mathcal{D}$ the contour consisting of line segments connecting $\sigma_0 - iT_1 , - N - iT_1, -N +iT_1, \sigma_0 + iT_1$. An application of Cauchy's residue theorem yields
\[
\psi _{0,m}^{(\beta )}(x) = {M_0} + {M_1} + {M_\rho } + {M_{ - 2k}} + {R_1} + {R_2}
\]
where the terms on the right-hand sides are the residues at $s=0$, $s=1$, the non-trivial zeros $\rho$ and at the trivial zeros $-2k$ for $k=1,2,3,\cdots$, respectively, and where
\[
{R_2} =  - \frac{1}{{2\pi i}}\oint\nolimits_\mathcal{D} {\sigma_{1 - s/\beta }^{(\beta )}(m)\frac{{\zeta '}}{\zeta }(s)\frac{{{x^s}}}{s}ds}. 
\]
For the constant term we have
\[
{M_0} = \mathop {\operatorname{res} }\limits_{s = 0} \sigma_{1 - s/\beta }^{(\beta )}(m)\frac{{\zeta '}}{\zeta }(s)\frac{{{x^s}}}{s} = \sigma_1^{(\beta )}(m)\frac{{\zeta '}}{\zeta }(0) = \sigma_1^{(\beta )}(m)\log (2\pi ),
\]
and for the leading term
\[
{M_1} = \mathop {\operatorname{res} }\limits_{s = 1} \sigma_{1 - s/\beta }^{(\beta )}(m)\frac{{\zeta '}}{\zeta }(s)\frac{{{x^s}}}{s} = \sigma_{1 - 1/\beta }^{(\beta )}(m)x.
\]
The fluctuaring term coming from the non-trivial zeros yields
\[
{M_\rho } = \sum_\rho  {\mathop {\operatorname{res} }\limits_{s = \rho } \sigma_{1 - s/\beta }^{(\beta )}(m)\frac{{\zeta '}}{\zeta }(s)\frac{{{x^s}}}{s}}  = \sum_\rho  {\sigma_{1 - \rho /\beta }^{(\beta )}(m)\frac{{{x^\rho }}}{\rho }}, 
\]
by the use of the logarithmic derivative of the Hadamdard product of the Riemann zeta-function, and finally for the trivial zeros
\[
{M_{ - 2k}} = \sum_{k = 1}^\infty  {\mathop {\operatorname{res} }\limits_{s =  - 2k} \sigma_{1 - s/\beta }^{(\beta )}(m)\frac{{\zeta '}}{\zeta }(s)\frac{{{x^s}}}{s}}  = \sum_{k = 1}^\infty  {\sigma_{1 + 2k/\beta }^{(\beta )}(m)\frac{{{x^{ - 2k}}}}{{ - 2k}}}. 
\]
Since $|\sigma  \pm i{T_1}| \geqslant T$, we see, by our choice of $T_1$, that
\begin{align*}
  \int_{ - 1 \pm i{T_1}}^{{\sigma _0} \pm i{T_1}} {\sigma_{1 - s/\beta }^{(\beta )}(m)\frac{{\zeta '}}{\zeta }(s)\frac{{{x^s}}}{s}ds}  &\ll \frac{{{{\log }^2}T}}{T} \bigg(\int_{ - 1}^{{\min(\beta,\sigma_0)}} \left(\frac{x}{m}\right)^{\sigma} d\sigma  + \int_{\min(\beta,\sigma_0)}^{{\sigma _0}} x^{\sigma} d\sigma \bigg) \\
   &\ll \frac{{x{{\log }^2}T}}{{T\log x}} \ll  \frac{{x{{\log }^2}T}}{T} . 
\end{align*}
Next, we invoke the following result, see Lemma 12.4 of \cite{montvau}: if $\mathcal{A}$ denotes the set of points $s \in \mathbb{C}$ such that $\sigma \le -1$ and $|s+2k| \ge 1/4$ for every positive integer $k$, then
\[
\frac{{\zeta '}}{\zeta }(s) \ll \log (|s| + 1) 
\]
uniformly for $s \in \mathcal{A}$. This, combined with the fact that 
\[
\frac{\log |\sigma  \pm i{T_1}|}{|\sigma  \pm i{T_1}|} \ll \frac{\log T}{T} ,
\]
gives us
\begin{align*}
  \int_{ - N \pm i{T_1}}^{ - 1 \pm i{T_1}} {\sigma_{1 - s/\beta }^{(\beta )}(m)\frac{{\zeta '}}{\zeta }(s)\frac{{{x^s}}}{s}ds}  &\ll \frac{{\log T}}{T}\int_{ - \infty }^{ - 1} {{\left(\frac{x}{m}\right)^\sigma }d\sigma } \ll \frac{{\log T}}{{xT\log x}} \ll  \frac{{\log T}}{T} .
\end{align*}
Thus this bounds the horizontal integrals. Finally, for the left vertical integral, we have that $|-N +iT| \ge N$ and by the above result regarding the bound of the logarithmic derivative we also see that
\begin{align*}
  \int_{ - N - i{T_1}}^{ - N + i{T_1}} {\sigma_{1 - s/\beta }^{(\beta )}(m)\frac{{\zeta '}}{\zeta }(s)\frac{{{x^s}}}{s}ds}  &\ll \frac{{\log NT}}{N}{x^{ - N}}\sigma_{1 + N/\beta }^{(\beta )}(m)\int_{ - {T_1}}^{{T_1}} {dt} \ll  \frac{{T\log NT}}{{N}} \left(\frac{m}{x}\right)^N . 
\end{align*}
This last term goes to $0$ as $N \to \infty$ since $x > m$.
\section{Proof of Theorems \ref{PNTzerofree} and \ref{PNTRH}}
Let us denote by $\rho = \beta^* + i\gamma$ a non-trivial zero. For this, we will use the result that if $|\gamma| < T$, where $T$ is large, then $\beta^* < 1 - c_1 / \log T$, where $c_1$ is a positive absolute constant. This immediately yields
\[
|{x^\rho }| = {x^{{\beta ^*}}} < x e^{- {c_1}\log x/\log T}.
\]
Moreover, $|\rho| \ge \gamma$, for $\gamma >0$. We recall that the number of zeros $N(t)$ up to height $t$ is (Chapter 18 of \cite{davenport}.)
\[
N(t) = \frac{t}{{2\pi }}\log \frac{t}{{2\pi }} - \frac{t}{{2\pi }} + O(\log t) \ll  t \log t .
\]
We need to estimate the following sum
\[
\sum_{0 < \gamma  < T} {\frac{{\sigma_{1 - \gamma /\beta }^{(\beta )}(m)}}{\gamma }} = \sum_{1 < \gamma  < T} {\frac{{\sigma_{1 - \gamma /\beta }^{(\beta )}(m)}}{\gamma }}. 
\]
This is
\[
\ll \int_1^T {\frac{{\sigma_{1 - t/\beta }^{(\beta )}(m)}}{{{t^2}}}N(t)dt} \ll  m \int_1^{\beta} {\frac{{\log t}}{t}dt} + m^{\beta + 1}\int_\beta^{T} {\frac{{ \log t}}{t m^t}dt} \ll {\log ^2}T .
\]
Therefore,
\[
\sum_{|\gamma | < T} {\left| {\sigma_{1 - \rho /\beta }^{(\beta )}(m)\frac{{{x^\rho }}}{\rho }} \right|}  \ll  x{(\log T)^2} e^{- {c_1}\log x/\log T} .
\]
Without loss of generality we take $x$ to be an integer in which case the error term of the explicit formula of Theorem \ref{explicitCohenChebyshev} becomes
\[
R(x,T) \ll \frac{x^{1+\varepsilon}\log x}{T} + \frac{{x{{\log }^2}T}}{T} .
\]
Finally, we can bound the sum
\[
 \sum_{k = 1}^{\infty} \sigma_{1 + 2k/\beta}^{(\beta)} (m) \frac{x^{-2k}}{2k} \le m^{\beta + 1} \sum_{k = 1}^{\infty} m^{2k} \frac{x^{-2k}}{2k} = \frac{1}{2} m^{\beta + 1} \log \bigg(1 - \bigg(\frac{x}{m}\bigg)^{-2} \bigg) = o(1).
\]
Thus, we have the following
\[
|\psi _m^{(\beta )}(x) - \sigma_{1 - 1/\beta }^{(\beta )}(m)x| \ll  \frac{x^{1+\varepsilon}\log x}{T} + \frac{{x{{\log }^2}T}}{T} + x{(\log T)^2}e^{- {c_1}\log x/\log T},
\]
for large $x$. Let us now take $T$ as a function of $x$ by setting ${(\log T)^2} = \log x$ so that 
\begin{align*}
  |\psi _m^{(\beta )}(x) - \sigma_{1 - 1/\beta }^{(\beta )}(m)x| \ll x^{1+\varepsilon} \log x e^{- {(\log x)^{1/2}}} + x(\log x)e^{- {c_1}{(\log x)^{1/2}}}  \ll x^{1+\varepsilon} e^{- {c_2}{(\log x)^{1/2}}} ,  
\end{align*}
for all $\varepsilon >0$ provided that $c_2$ is a suitable constant that is less than both $1$ and $c_1$.

Next, if we assume the Riemann hypothesis, then $|{x^\rho }| = {x^{1/2}}$ and the other estimate regarding $\sum {\sigma_{1 - \gamma /\beta }^{(\beta )}(m){\gamma ^{ - 1}}}$ stays the same. Thus, the explicit formula yields
\[
|\psi _m^{(\beta )}(x) - \sigma_{1 - 1/\beta }^{(\beta )}(m)x| = O \bigg( x^{1/2} \log^2 T + \frac{x^{1+\varepsilon}\log x}{T} + \frac{{x{{\log }^2}T}}{T} \bigg)
\]
provided that $x$ is an integer. Taking $T = x^{1/2}$ leads to
\begin{align*}
\psi _m^{(\beta )}(x) & = \sigma_{1 - 1/\beta }^{(\beta )}(m)x + O({x^{1/2}}{\log ^2}{x} + x^{1/2+\varepsilon}\log x ) = \sigma_{1 - 1/\beta }^{(\beta )}(m)x + O({x^{1/2+\varepsilon}} )
\end{align*}
\section{Proof of Theorem \ref{bartz11}}
We now look at the contour integral
\[{\Upsilon ^{(\beta )}}(n,z) = \oint\nolimits_\Omega  {\frac{{\sigma_{1 - s/\beta }^{(\beta )}(n)}}{{\zeta (s)}}{e^{sz}}ds} \]
taken around the path $\Omega = [-1/2,3/2,3/2+iT_n,-1/2+iT_n]$. 

For the upper horizontal integral we have
\begin{align}
  \bigg| {\int_{-1/2 + i{T_m}}^{3/2 + i{T_m}} {\frac{{\sigma_{1 - s/\beta }^{(\beta )}(n)}}{{\zeta (s)}}{e^{sz}}ds} } \bigg| &\leqslant \int_{-1/2}^{\min(\beta,3/2)} {\bigg| {\frac{{\sigma_{1 - s/\beta }^{(\beta )}(n){e^{sz}}}}{{\zeta (\sigma  + i{T_m})}}} \bigg|d\sigma }  + \int_{\min(\beta,3/2)}^{3/2} {\bigg| {\frac{{\sigma_{1 - s/\beta }^{(\beta )}(n){e^{sz}}}}{{\zeta (\sigma  + i{T_m})}}} \bigg|d\sigma }  \nonumber \\
   &\ll T_m^{{c_1}}{n^{\beta  + 1}}{e^{ - T_m y}}\int_{-1/2}^{\min(\beta,3/2)} {{n^{ - \sigma }}{e^{\sigma x}}d\sigma }  + n{e^{ - T_m y}}\int_{\min(\beta,3/2)}^{3/2} {{e^{\sigma x}}d\sigma }  \nonumber \\
   &\to 0 \nonumber  
\end{align}
as $m \to \infty$. An application of Cauchy's residue theorem yields
\begin{align} \label{cauchy_bartz_cohen} 
\int_{ - 1/2 + i\infty }^{ - 1/2} {\frac{{\sigma_{1 - s/\beta }^{(\beta )}(n)}}{{\zeta (s)}}{e^{sz}}ds}  + \int_{ - 1/2}^{3/2} {\frac{{\sigma_{1 - s/\beta }^{(\beta )}(n)}}{{\zeta (s)}}{e^{sz}}ds}  + \int_{3/2}^{3/2 + i\infty } {\frac{{\sigma_{1 - s/\beta }^{(\beta )}(n)}}{{\zeta (s)}}{e^{sz}}ds}  = 2\pi i\varpi _n^{(\beta )}(z),
\end{align}
where for $\imag(z) > 0$ we have
\[\varpi _n^{(\beta )}(z) = \mathop {\lim }\limits_{m \to \infty } \sum_{\substack{\rho  \\ 0 < \operatorname{Im} \rho  < {T_m}}} {\frac{1}{{({k_\rho } - 1)!}}\frac{{{d^{k\rho  - 1}}}}{{d{s^{k\rho  - 1}}}}{{\bigg[ {{{(s - \rho )}^{k\rho }}\frac{{\sigma_{1 - s/\beta }^{(\beta )}(n)}}{{\zeta (s)}}{e^{sz}}} \bigg]}_{s = \rho }}} \]
with $k_\rho$ denoting the order of multiplicity of the non-trivial zero $\rho$ of the Riemann zeta-function. We denote by $\varpi _{1,n}^{(\beta )}(z)$ and by $\varpi _{2,n}^{(\beta )}(z)$ the first and second integrals on the left hand-side of \eqref{cauchy_bartz_cohen} respectively. If we operate under assumption that there are no multiple zeros, then the above can be simplified to \eqref{cohen_bartz_varpi}. This is done for the sake of simplicity, since dealing with this extra term would relax this assumption.

If $z \in \mathbb{H}$ then by \eqref{cauchy_bartz_cohen} one has
\[
2\pi i\varpi _n^{(\beta )}(z) = \varpi _{1,n}^{(\beta )}(z) + \varpi _{2,n}^{(\beta )}(z) + \varpi _{3,n}^{(\beta )}(z),
\]
where the last term is given by the vertical integral on the right of the $\Omega$ contour
\[
\varpi _{3,n}^{(\beta )}(z) = \int_{3/2}^{3/2 + i\infty } {\frac{{\sigma_{1 - s/\beta }^{(\beta )}(n)}}{{\zeta (s)}}{e^{sz}}ds}. 
\]
By the use of the Dirichlet series of $c_q^{(\beta)}(n)$ given in Lemma \ref{lemma23} and since we are in the region of absolute convergence we see that
\[
\varpi _{3,n}^{(\beta )}(z) = \sum_{q = 1}^\infty  {c_q^{(\beta )}(n)\int_{3/2}^{3/2 + i\infty } {{e^{sz - s\log q}}ds} }  =  - {e^{3z/2}}\sum_{q = 1}^\infty  {\frac{{c_q^{(\beta )}(n)}}{{{q^{3/2}}(z - \log q)}}} .
\]
By standard bounds of Stirling and the functional equation of the Riemann zeta-function we have that
\[
|\zeta(-\tfrac{1}{2} + it)| \approx (1 +|t|)
\]
as $|t| \to \infty$. Therefore, we see that
\[|\varpi _{1,n}^{(\beta )}(z)| = \bigg| {\int_{ - 1/2 + i\infty }^{ - 1/2} {\frac{{\sigma_{1 - s/\beta }^{(\beta )}(n)}}{{\zeta (s)}}{e^{sz}}ds} } \bigg| = O \bigg( {n^{\beta  + 3/2}}{e^{ - x/2}}\int_0^\infty  {{e^{ - ty}}dt} \bigg) = O \bigg(\frac{{{n^{\beta  + 3/2}}{e^{ - x/2}}}}{y} \bigg),
\]
and $\varpi _{1,n}^{(\beta )}(z)$ is absolutely convergent for $y = \imag(z) > 0$. We know that $\varpi _{n}^{(\beta )}(z)$ is analytic for $y>0$ and the next step is to show that that it can be meromorphically continued for $y > - \pi$. To this end, we go back to the integral
\[\varpi _{1,n}^{(\beta )}(z) =  - \int_{ - 1/2}^{ - 1/2 + i\infty } {\frac{{\sigma_{1 - s/\beta }^{(\beta )}(n)}}{{\zeta (s)}}{e^{sz}}ds} \]
with $y>0$. The functional equation of $\zeta(s)$ yields
\begin{align} \label{aux_1}
  \varpi _{1,n}^{(\beta )}(z) &=  - \int_{ - 1/2}^{ - 1/2 + i\infty } {\sigma_{1 - s/\beta }^{(\beta )}(n)\frac{{\Gamma (s)}}{{\zeta (1 - s)}}{e^{s(z - \log 2\pi  - i\pi /2)}}ds} \nonumber \\ 
	&- \int_{ - 1/2}^{ - 1/2 + i\infty } {\sigma_{1 - s/\beta }^{(\beta )}(n)\frac{{\Gamma (s)}}{{\zeta (1 - s)}}{e^{s(z - \log 2\pi  + i\pi /2)}}ds}  \nonumber \\
   &= \varpi _{11,n}^{(\beta )}(z) + \varpi _{12,n}^{(\beta )}(z).
\end{align}
Since one has by standard bounds that
\[
\frac{{\Gamma ( - \tfrac{1}{2} + it)}}{{\zeta (\tfrac{3}{2} - it)}} \ll {e^{ - \pi t/2}} 
\]
it then follows that 
\[\varpi _{11,n}^{(\beta )}(z) \ll {n^{\beta  + 3/2}}\int_0^\infty  {{e^{ - \pi t/2}}{e^{ - \tfrac{1}{2}x - ty + t\pi /2}}dt} \ll \frac{{{e^{ - x/2}}{n^{\beta  + 3/2}}}}{y},
\]
and hence $\varpi _{11,n}^{(\beta )}(z)$ is regular for $y>0$. Similarly,
\[\varpi _{12,n}^{(\beta )}(z) \ll {n^{\beta  + 3/2}}{e^{ - x/2}}\int_0^\infty  {{e^{ - (\pi  + y)t}}dt} \ll  \frac{{{n^{\beta  + 3/2}}{e^{ - x/2}}}}{{y + \pi }} ,
\]
so that $\varpi _{12,n}^{(\beta )}(z)$ is regular for $y > -\pi$. Let us further split $\varpi _{11,n}^{(\beta )}(z)$
\[
\varpi _{11,n}^{(\beta )}(z) = \bigg( { - \int_{ - 1/2 - i\infty }^{ - 1/2 + i\infty } {}  + \int_{ - 1/2 - i\infty }^{ - 1/2} {} } \bigg){e^{s(z - \log 2\pi  - i\pi /2)}}\sigma_{1 - s/\beta }^{(\beta )}(n)\frac{{\Gamma (s)}}{{\zeta (1 - s)}}ds = I_{1,n}^{(\beta )}(z) + I_{2,n}^{(\beta )}(z).
\]
By the same technique as above, it follows that the integral $I_{2,n}^{(\beta )}(z)$ is convergent for $y < \pi$. Moreover, since $\varpi _{11,n}^{(\beta )}(z)$ is regular for $y>0$, then it must be that $I_{1,n}^{(\beta )}(z)$ is convergent for $0 < y < \pi$. Let
\[
f(n,q,s,z) = \sigma_{1 - s/\beta }^{(\beta )}(n){e^{s(z - \log 2\pi  - i\pi /2 + \log q)}}\Gamma (s).
\]
By the theorem of residues we see that
\begin{align} \label{aux_2}
   - \int_{ - 1/2 - i\infty }^{ - 1/2 + i\infty } {f(n,q,s,z)ds} &=  - \int_{1 - i\infty }^{1 + i\infty } {f(n,q,s,z)ds}  + 2\pi i\mathop {\operatorname{res} }\limits_{s = 0} f(n,q,s,z) \nonumber \\
   &=  - \int_{1 - i\infty }^{1 + i\infty } {f(n,q,s,z)ds}  + 2\pi i\sigma_1^{(\beta )}(n). 
\end{align}
This last integral integral is equal to
\begin{align} \label{aux_3}
  \int_{1 - i\infty }^{1 + i\infty } {f(n,q,s,z)ds} = 2\pi i\sum_{k = 0}^\infty  {\frac{{{{( - 1)}^k}}}{{k!}}{{\left(e^{-z} \frac{2 \pi i}{q} \right)}^k}\sigma_{1 + k/\beta }^{(\beta )}(n)}, 
\end{align}
where the last sum is absolutelty convergent. To prove this note that 
\[\operatorname{Re} ( {e^{ - (z - \log 2\pi  - i\pi /2 + \log q)}} )  = (e^{-x} 2\pi/q)\sin y > 0\]
for $0 < y < \pi$. Next, consider the path of integration with vertices $[1 \pm iT]$ and $[ -N \pm iT]$, where $N$ is an odd positive integer. By Cauchy's theorem
 \begin{align*}
  & \bigg( \int_{1 - iT }^{1 + iT } - \int_{-N + iT }^{1 + iT } - \int_{-N - iT }^{-N + iT } + \int_{-N - iT }^{1 - iT } \bigg) \left( e^{-z} \frac{2 \pi i}{q} \right)^{ - s}\sigma_{1 - s/\beta }^{(\beta )}(n)\Gamma (s)ds \\
  & = 2\pi i\sum_{k = 0}^{N-1} \frac{( - 1)^k}{k!}{\left(e^{-z} \frac{2 \pi i}{q} \right)^k}\sigma_{1 + k/\beta }^{(\beta )}(n).   
 \end{align*}
The third integral on the far left of the path can be bounded in the following way
\begin{align*}
 I_3 &:= \int_{-N - iT }^{-N + iT } \left( e^{-z} \frac{2 \pi i}{q} \right)^{ - s}\sigma_{1 - s/\beta }^{(\beta )}(n)\Gamma (s)ds \ll \int_{-T}^{T} \bigg(e^{-x} \frac{2\pi}{q} \bigg)^{N} e^{-t (y - \frac{\pi}{2})} n^{\beta + N + 1} e^{- \frac{\pi}{2} |t|} dt  \\
 &\ll \left(e^{-x} \frac{2\pi n}{q}\right)^{N} \int_{-T}^{T} e^{-t (y - \frac{\pi}{2})} e^{- \frac{\pi}{2} |t|} dt \ll  \left(e^{-x} \frac{2\pi n}{q}\right)^{N} ( e^{T(y - \pi)} + e^{- Ty} )  \\
 &\ll e^{-Nx + N\log \frac{2 \pi n}{q}}  e^{-T \min(y, \pi - y)} .
\end{align*} 
We now bound the horizontal parts. For the top one
 \begin{align*}
  I_{+} &:= \int_{-N + iT }^{1 + iT } \left( e^{-z} \frac{2 \pi i}{q} \right)^{ - s}\sigma_{1 - s/\beta }^{(\beta )}(n)\Gamma (s)ds \ll \int_{-N}^{1} \left(e^{-x} \frac{2\pi}{q}\right)^{-\sigma} e^{-T (y - \frac{\pi}{2})} n^{\beta - \sigma + 1} T^{\frac{1}{2}} e^{- T \frac{\pi}{2}} d\sigma \\
  &\ll T^{\frac{1}{2}} e^{-T y} \int_{-N}^{1} \left(e^{-x} \frac{2\pi n}{q}\right)^{-\sigma} d\sigma \ll  T^{\frac{3}{2}} e^{T (y - \pi)} \left(e^{-x} \frac{2\pi n}{q}\right)^{N} \\
	&\ll T^{\frac{1}{2}} e^{- Nx + N\log \frac{2 \pi n}{q}}  e^{-T y} ,
 \end{align*}
 and analogously for the bottom one
 \begin{align*}
  I_{-} &:= \int_{-N - iT }^{1 - iT } \left( e^{-z} \frac{2 \pi i}{q} \right)^{ - s}\sigma_{1 - s/\beta }^{(\beta )}(n)\Gamma (s)ds \ll \int_{-N}^{1} \left(e^{-x} \frac{2\pi}{q}\right)^{-\sigma} e^{T (y - \frac{\pi}{2})} n^{\beta - \sigma + 1} T^{\frac{1}{2}} e^{-T \frac{\pi}{2}} d\sigma \\
  &\ll T^{\frac{1}{2}} e^{-T (\pi - y)} \left(e^{-x} \frac{2\pi n}{q}\right)^{N} \ll T^{\frac{1}{2}} e^{-Nx + N\log \frac{2 \pi n}{q}}  e^{-T (\pi - y)} .
 \end{align*}
Let now $T = T(N)$ such that 
\[
T > \frac{N(-x + \log \frac{2 \pi n}{q})}{\min(y, \pi - y)}.
\]
It is now easy to see that all of the three parts tend to 0 as $N \to \infty$ through odd integers, and thus the result follows.
Thus, putting together \eqref{aux_3} with \eqref{aux_1} and \eqref{aux_2} gives us
\begin{align} \label{general_series}
  I_{1,n}^{(\beta )}(z) &=  - \int_{ - 1/2 - i\infty }^{ - 1/2 + i\infty } {{e^{s(z - \log 2\pi  - i\pi /2)}}\sigma_{1 - s/\beta }^{(\beta )}(n)\frac{{\Gamma (s)}}{{\zeta (1 - s)}}ds}  \nonumber \\
   &=  - \sum_{q = 1}^\infty  {\frac{{\mu (q)}}{q}\int_{ - 1/2 - i\infty }^{ - 1/2 + i\infty } {{e^{s(z - \log 2\pi  - i\pi /2)}}{q^s}\sigma_{1 - s/\beta }^{(\beta )}(n)\Gamma (s)ds} }  \nonumber \\
   &= - \sum_{q = 1}^\infty  {\frac{{\mu (q)}}{q}\bigg( { 2\pi i\sum_{k = 0}^\infty  {\frac{{{{( - 1)}^k}}}{{k!}}{{\left( {{e^{ - z}}\frac{{2\pi i}}{q}} \right)}^k}\sigma_{1 + k/\beta }^{(\beta )}(n)}  - 2\pi i\sigma_1^{(\beta )}(n)} \bigg)}  \nonumber \\
   &=  - 2\pi i\sum_{q = 1}^\infty  {\frac{{\mu (q)}}{q}\sum_{k = 0}^\infty  {\frac{{{{( - 1)}^k}}}{{k!}}{{\left( {{e^{ - z}}\frac{{2\pi i}}{q}} \right)}^k}\sigma_{1 + k/\beta }^{(\beta )}(n)} }  
\end{align}
since $\sum\nolimits_{q = 1}^\infty  {\mu (q)/q}  = 0$. Moreover,
\begin{align}
  |{(2\pi i)^{ - 1}}I_{1,n}^{(\beta )}(z)| &= \bigg| {\sum_{q = 1}^\infty  {\frac{{\mu (q)}}{q}\bigg( {\sum_{k = 0}^\infty  {\frac{{{{( - 1)}^k}}}{{k!}}{{\left({{e^{ - z}}\frac{{2\pi i}}{q}} \right)}^k}\sigma_{1 + k/\beta }^{(\beta )}(n)}  - \sigma_{1}^{(\beta )}(n)} \bigg)} } \bigg| \nonumber \\
	& = \bigg| {\sum_{q = 1}^\infty  {\frac{{\mu (q)}}{q}\bigg( {\sum_{k = 1}^\infty  {\frac{{{{( - 1)}^k}}}{{k!}}{{\left({{e^{ - z}}\frac{{2\pi i}}{q}} \right)}^k}\sigma_{1 + k/\beta }^{(\beta )}(n)}} \bigg)} } \bigg| \nonumber \\
   &\leqslant n^{\beta+1} \sum_{q = 1}^\infty  {\frac{1}{q}\bigg( {\sum_{k = 1}^\infty  {\frac{{{1}}}{{k!}}{{\left( {{e^{ - x}}\frac{{2\pi }}{q}} \right)}^k}{n^k}}} \bigg)} \nonumber \\ 
	&= n^{\beta+1} \sum_{q = 1}^\infty  {\frac{1}{q}\left( {\exp \left( {{e^{ - x}}\frac{{2\pi }}{q}n} \right)} - 1\right)}  \nonumber \\
   &\ll {n^{\beta  + 1}}{e^{2\pi n /{e^x}}}\sum_{q \leqslant [2\pi n /{e^x}]} \frac{1}{q} + \frac{{2\pi n^{\beta + 2}}}{{{e^x}}}  \sum_{q \geqslant [2\pi n/{e^x}] + 1} {\frac{1}{{{q^2}}}} \ll  {c_2}(x) , \nonumber 
\end{align}
and the series on the right hand-side of \eqref{general_series} is absolutely convergent for all $y$. Thus, this proves the analytic continuation of $\varpi _{1,n}^{(\beta )}(z)$ to $y > -\pi$. For $|y| < \pi$ one has
\begin{align*}
  \varpi _{1,n}^{(\beta )}(z) &= I_{1,n}^{(\beta)}(z) + I_{2,n}^{(\beta)}(z) + \varpi_{12,n}^{(\beta)}(z) = - 2\pi i\sum_{q = 1}^\infty  {\frac{{\mu (q)}}{q}\sum_{k = 0}^\infty  {\frac{{{{( - 1)}^k}}}{{k!}}{{\left( {{e^{ - z}}\frac{{2\pi i}}{q}} \right)}^k}\sigma_{1 + k/\beta }^{(\beta )}(n)} }   \\
  & + \int_{ - 1/2 - i\infty }^{ - 1/2} {{e^{s(z - \log 2\pi  - i\pi /2)}}\sigma_{1 - s/\beta }^{(\beta )}(n)\frac{{\Gamma (s)}}{{\zeta (1 - s)}}ds}   \\
  & - \int_{ - 1/2}^{ - 1/2 + i\infty } {{e^{s(z - \log 2\pi  + i\pi /2)}}\sigma_{1 - s/\beta }^{(\beta )}(n)\frac{{\Gamma (s)}}{{\zeta (1 - s)}}ds}   
\end{align*}
where the first term is holomorphic for all $y$, the second one for $y < \pi$ and the third for $y > -\pi$. Hence, this last equation shows the continuation of $\varpi _{n}^{(\beta )}(z)$ to the region $y > -\pi$. To complete the proof of the theorem, one then considers the function
\[\hat \varpi _n^{(\beta )}(z) = \mathop {\lim }\limits_{m \to \infty } \sum_{\substack{\rho \\ - {T_m} < \operatorname{Im} \rho  < 0}} {\frac{{\sigma_{1 - \rho /\beta }^{(\beta )}(n)}}{{\zeta '(\rho )}}{e^{\rho z}}}, \]
where the zeros are in the lower part of the critical strip and $z$ now belongs to the lower half-plane $\hat {\mathbb{H}} = \{z \in \C : \imag(z)<0 \}$. It then follows by repeating the above argument that
\[\hat \varpi _{1,n}^{(\beta )}(z) = \hat \varpi _{11,n}^{(\beta )}(z) + \hat \varpi _{12,n}^{(\beta )}(z),\]
where
\[\hat \varpi _{11,n}^{(\beta )}(z) =  - \int_{ - 1/2 - i\infty }^{ - 1/2} {{e^{s(z - \log 2\pi  - i\pi /2)}}\sigma_{1 - s/\beta }^{(\beta )}(n)\frac{{\Gamma (s)}}{{\zeta (1 - s)}}ds} \]
is absolutely convergent for $y < \pi$ and 
\[\hat \varpi _{12,n}^{(\beta )}(z) =  - \int_{ - 1/2 - i\infty }^{ - 1/2} {{e^{s(z - \log 2\pi  + i\pi /2)}}\sigma_{1 - s/\beta }^{(\beta )}(n)\frac{{\Gamma (s)}}{{\zeta (1 - s)}}ds} \]
is absolutely convergent for $y<0$. Spliting up the first integral just as before and using a similar analysis to the one we have just carried out, but using the fact that $\zeta (\bar s) = \overline {\zeta (s)}$ and choosing $T_m$ ($m \le T_m \le m+1$) such that
\[
\left| \frac{1}{{{\zeta (\sigma  - i{T_n})}}} \right| < T_n^{{c_1}},\quad  - 1 \leqslant \sigma  \leqslant 2,
\]
yields that 
\begin{align}
  \hat \varpi _{1,n}^{(\beta )}(z) &=  - 2\pi i\sum_{q = 1}^\infty  {\frac{{\mu (q)}}{q}\sum_{k = 0}^\infty  {\frac{1}{k!}{{\left( {{e^{ - z}}\frac{{2\pi i}}{q}} \right)}^k}\sigma_{1 + k/\beta }^{(\beta )}(n)} }  \nonumber \\
   &- \int_{ - 1/2 - i\infty }^{ - 1/2} {{e^{s(z - \log 2\pi  - i\pi /2)}}\sigma_{1 - s/\beta }^{(\beta )}(n)\frac{{\Gamma (s)}}{{\zeta (1 - s)}}ds}  \nonumber \\
   &+ \int_{ - 1/2}^{ - 1/2 + i\infty } {{e^{s(z - \log 2\pi  + i\pi /2)}}\sigma_{1 - s/\beta }^{(\beta )}(n)\frac{{\Gamma (s)}}{{\zeta (1 - s)}}ds}.  \nonumber 
\end{align}
Therefore, $\hat \varpi _n^{(\beta )}(z)$ admits an analytic continuation from $y<0$ to the half-plane $y < \pi$.
\section{Proof of Theorem \ref{bartz12}}
Adding up the two results of our previous section
\[\varpi _{1,n}^{(\beta )}(z) + \hat \varpi _{1,n}^{(\beta )}( z)  =  - 2\pi i\sum_{q = 1}^\infty  {\frac{{\mu (q)}}{q}\sum_{k = 0}^\infty  {\frac{1}{{k!}}\bigg\{ {{{\left( {{e^{ - z}}\frac{{2\pi i}}{q}} \right)}^k} + {{\left( { - {e^{ - z}}\frac{{2\pi i}}{q}} \right)}^k}} \bigg\}\sigma_{1 + k/\beta }^{(\beta )}(n)} }. \]
The other terms do not contribute since
\[\varpi _{2,n}^{(\beta )}(z) + \hat \varpi _{2,n}^{(\beta )}(z) = \bigg( {\int_{ - 1/2}^{3/2} {}  + \int_{3/2}^{ - 1/2} {} } \bigg){e^{sz}}\frac{{\sigma_{1 - s/\beta }^{(\beta )}(n)}}{{\zeta (s)}}ds = 0,\]
and by the Theorem \ref{bartz11} we have
\[\varpi _{3,n}^{(\beta )}(z) + \hat \varpi _{3,n}^{(\beta )}(z) = 0.\]
Consequently, we have
\begin{align} \label{pre_FE}
\varpi _n^{(\beta )}(z) + \hat \varpi _n^{(\beta )}(z) =  - \sum_{q = 1}^\infty  {\frac{{\mu (q)}}{q}\sum_{k = 0}^\infty  {\frac{1}{{k!}}\bigg\{ {{{\left( {{e^{ - z}}\frac{{2\pi i}}{q}} \right)}^k} + {{\bigg( { - {e^{ - z}}\frac{{2\pi i}}{q}} \bigg)}^k}} \bigg\}\sigma_{1 + k/\beta }^{(\beta )}(n)} }
\end{align}
for $|y| < \pi$.
Thus, once again, by the previous theorem for all $y < \pi$
\[
\varpi _n^{(\beta )}(z) =  - \hat \varpi _n^{(\beta )}(z) - \sum_{q = 1}^\infty  {\frac{{\mu (q)}}{q}\sum_{k = 0}^\infty  {\frac{1}{{k!}}\bigg\{ {{{\left( {{e^{ - z}}\frac{{2\pi i}}{q}} \right)}^k} + {{\left( { - {e^{ - z}}\frac{{2\pi i}}{q}} \right)}^k}} \bigg\}\sigma_{1 + k/\beta }^{(\beta )}(n)} } 
\]
by analytic continuation, and for $y > -\pi$
\[\hat \varpi _n^{(\beta )}(z) =  - \varpi _n^{(\beta )}(z) - \sum_{q = 1}^\infty  {\frac{{\mu (q)}}{q}\sum_{k = 0}^\infty  {\frac{1}{{k!}}\bigg\{ {{{\left( {{e^{ - z}}\frac{{2\pi i}}{q}} \right)}^k} + {{\left( { - {e^{ - z}}\frac{{2\pi i}}{q}} \right)}^k}} \bigg\}\sigma_{1 + k/\beta }^{(\beta )}(n)} }. \]
This shows that $\varpi _n^{(\beta )}(z)$ and $\hat \varpi _n^{(\beta )}(z)$ can be analytically continued over $\C$ as a meromorphic function and that \eqref{pre_FE} holds for all $z$. To prove the functional equation, we look at the zeros. If $\rho$ is a non-trivial zero of $\zeta(s)$ then so is $\bar \rho$. For $z \in \mathbb{H}$ one has
\[\varpi _n^{(\beta )}(z) = \mathop {\lim }\limits_{m \to \infty } \overline {\sum_{\substack{\rho \\ 0 < \operatorname{Im} \rho  < {T_m}}} {\overline {\frac{{\sigma_{1 - \rho /\beta }^{(\beta )}(n)}}{{\zeta '(\rho )}}{e^{\rho z}}} } } .\]
By using $\overline {\sigma_{1 - \rho /\beta }^{(\beta )}(n)} = \sigma_{1 - \bar \rho /\beta }^{(\beta )}(n)$ and since $\zeta (\bar s) = \overline {\zeta (s)}$ we get
\begin{align}
  \varpi _n^{(\beta )}(z) &= \overline {\sum_{\substack{\rho \\ 0 < \operatorname{Im} \rho  < {T_m}}} {\overline {\frac{{\sigma_{1 - \rho /\beta }^{(\beta )}(n)}}{{\zeta '(\rho )}}{e^{\rho z}}} } }  = \sum_{\substack{\rho \\ 0 < \operatorname{Im} \rho  < {T_m}}} {\frac{{\sigma_{1 - \bar \rho /\beta }^{(\beta )}(n)}}{{\zeta '(\bar \rho )}}{e^{\overline {\rho z} }}}  \nonumber \\
   &= \sum_{\substack{\rho \\ - {T_m} < \operatorname{Im} \rho  < 0}} {\frac{{\sigma_{1 - \rho /\beta }^{(\beta )}(n)}}{{\zeta '(\rho )}}{e^{\rho \bar z}}}  = \overline {\hat{\varpi} _n^{(\beta )}(\bar z)} . \nonumber 
\end{align}
Invoking \eqref{pre_FE} with $z \in \mathbb{H} $ we see that
\begin{align}
  \varpi _n^{(\beta )}(z) &= \overline {\hat \varpi _n^{(\beta )}(\bar z)}  =  - \overline {\varpi _n^{(\beta )}(\bar z)}  - \overline {\sum_{q = 1}^\infty  {\frac{{\mu (q)}}{q}\sum_{k = 0}^\infty  {\frac{1}{{k!}}\bigg\{ {{{\left( {{e^{ - \bar z}}\frac{{2\pi i}}{q}} \right)}^k} + {{\left( { - {e^{ - \bar z}}\frac{{2\pi i}}{q}} \right)}^k}} \bigg\}\sigma_{1 + k/\beta }^{(\beta )}(n)} } }  \nonumber \\
   &=  - \overline {\varpi _n^{(\beta )}(\bar z)}  - \sum_{q = 1}^\infty  {\frac{{\mu (q)}}{q}\sum_{k = 0}^\infty  {\frac{1}{{k!}}\bigg\{ {{{\left( {{e^{ - z}}\frac{{2\pi i}}{q}} \right)}^k} + {{\left( { - {e^{ - z}}\frac{{2\pi i}}{q}} \right)}^k}} \bigg\}\sigma_{1 + k/\beta }^{(\beta )}(n)} }  \nonumber 
\end{align} 
and by complex conjugation for $z \in \hat{\mathbb{H}}$, and by analytic continuation for $z$ with $y=\imag(z)=0$. This proves the functional equation \eqref{FE_cohen_bartz}. 

Another expression can be found which depends on the values of the Riemann zeta-function at odd integers
\begin{align}
  A_n^{(\beta )}(z) &=  - \sum_{q = 1}^\infty  {\frac{{\mu (q)}}{q}\bigg( {\sum_{k = 0}^\infty  {\frac{1}{{k!}}\bigg( {{{\bigg( {{e^{ - z}}\frac{{2\pi i}}{q}} \bigg)}^k} + {{\left( { - {e^{ - z}}\frac{{2\pi i}}{q}} \right)}^k}} \bigg)\sigma_{1 + k/\beta }^{(\beta )}(n)}  - 2\sigma_1^{(\beta )}(n)} \bigg)}  \nonumber \\
   &=  - \sum_{q = 1}^\infty  {\frac{{\mu (q)}}{q}\sum_{k = 1}^\infty  {\frac{1}{{k!}}\bigg( {{{\left( {{e^{ - z}}\frac{{2\pi i}}{q}} \right)}^k} + {{\bigg( { - {e^{ - z}}\frac{{2\pi i}}{q}} \bigg)}^k}} \bigg)\sigma_{1 + k/\beta }^{(\beta )}(n)} }  \nonumber \\
   &=  - \sum_{k = 1}^\infty  {\frac{1}{{k!}}{{({e^{ - z}}2\pi i)}^k}\sigma_{1 + k/\beta }^{(\beta )}(n)\sum_{q = 1}^\infty  {\frac{{\mu (q)}}{{{q^{1 + k}}}}} }  - \sum_{k = 1}^\infty  {\frac{1}{{k!}}{{( - {e^{ - z}}2\pi i)}^k}\sigma_{1 + k/\beta }^{(\beta )}(n)\sum_{q = 1}^\infty  {\frac{{\mu (q)}}{{{q^{1 + k}}}}} }  \nonumber \\
   &= - \sum_{k = 1}^\infty  {\frac{1}{{k!}}({{({e^{ - z}}2\pi i)}^k} + {{( - {e^{ - z}}2\pi i)}^k})\sigma_{1 + k/\beta }^{(\beta )}(n)\frac{1}{{\zeta (1 + k)}}}  \nonumber \\
   &=- 2\sum_{k = 1}^\infty  {\frac{{{{( - 1)}^k}{{(2\pi )}^{2k}}}}{{(2k)!}}\frac{{{e^{ - 2kz}}\sigma_{1 + 2k/\beta }^{(\beta )}(n)}}{{\zeta (2k+1)}}},  \nonumber 
\end{align}
since the even terms vanish. Finally, if $z = x + iy$, then we are left with 
\[
|A_n^{(\beta )}(z)| \le 2 n^{\beta+1} \sum_{k = 1}^{\infty} \frac{(2 \pi n e^{-x})^{2k}}{(2k)!},
\]
which converges absolutely. Thus $A_n^{(\beta )}(z)$ defines an entire function.
\section{Proof of Theorem \ref{bartz13}}
This now follows from Theorem \ref{bartz12}.
\section{Acknowledgements}
The authors acknowledge partial support from SNF grants PP00P2\_138906 as well as 200020\_149150$\backslash$1. They also wish to thank the referee for useful comments which greatly improved the quality of the paper.


\begin{thebibliography}{99}

  \bibitem{bartz1} K. M. Bartz, \emph{On some complex explicit formulae connected with the M\"obius function, \textnormal{I}}, Acta Arithmetica \textbf{57} (1991).
	
	\bibitem{bartz2} K. M. Bartz, \emph{On some complex explicit formulae connected with the M\"obius function, \textnormal{II}}, Acta Arithmetica \textbf{57} (1991).
	
	
	\bibitem{cohen} E.~Cohen, \emph{An extension of Ramanujan's sum}, Duke Math. J. Volume \textbf{16}, Number 1 (1949).
	
	
	\bibitem{davenport} H.~Davenport, \emph{Multiplicative Number Theory}, Lectures in Advanced Mathematics, 1966.
	
	\bibitem{ivic} A. Ivi\'{c}, \emph{The Riemann Zeta Function}, John Wiley \& Sons, 1985.
	
	\bibitem{ivic1} A.~Ivi\'{c}, \emph{On certain functions that generalize von Mangoldt's function $\Lambda(n)$}, Mat. Vesnik (Belgrade) \textbf{12} (27), 361-366 (1975).

   \bibitem{ivic2} A.~Ivi\'{c}, \emph{On the asymptotic formulas for a generalization on von Mangoldt's function}, Rendiconti Mat. Roma \textbf{10} (1), Serie VI, 51-59 (1977).
	
  \bibitem{hardy} G.~H.~Hardy, \emph{Note on Ramanujan's trigonometrical function $c_q(n)$, and certain series of arithmetical functions}, Proceedings of the Cambridge Philosophical Society, \textbf{20}, 263-271.
	
	\bibitem{kaczorowski} J. Kaczorowski, \emph{Results on the M\"{o}bius function}, J. London Math. Soc. \textbf{(2)} 75 (2007), 509-521.
	
	\bibitem{littlewood} J.~E.~Littlewood, \emph{Quelques cons\'{e}quences de l'hypoth\`{e}se que la fonction $\zeta(s)$ de Riemann n'a pas de z\'{e}ros dans le demi-plan $\mathbf{R}(s) > \tfrac{1}{2}$}, C.R. 154 (1912), 263-6.
	
	\bibitem{montextreme}  H.~L.~Montgomery, \emph{Extreme values of the Riemann zeta-function}, Comment. Math. Helvetici \textbf{52} (1977), 511-518.
	
	\bibitem{montvau} H.~L.~Montgomery and R.~C.~Vaughan, \emph{Multiplicative Number Theory \textnormal{I}. Classical Theory}, Cambridge Univ. Press, 2007.
	
	\bibitem{ramanujan} S.~Ramanujan, \emph{On certain trigonometrical sums and their applications in the theory of numbers}, Transactions of the Cambridge Philosophical Society, vol. \textbf{22}, (1918).
	
	
	
	\bibitem{titchmarsh} E.~C.~Titchmarsh, \emph{The Theory of the Riemann Zeta-Function}, 2nd ed., Oxford Univ. Press, 1986.
	
	\bibitem{valiron} G.~Valiron, \emph{Sur les fonctions entieres d'ordre nul et ordre fini}, Annales de Toulouse \textbf{(3)}, 5 (1914), 117-257
	
\end{thebibliography}
\end{document}